\newcommand{\osc}{{\rm osc}}
\newtheorem{theorem}{Theorem}[section]
\newtheorem{lemma}[theorem]{Lemma}
\newtheorem{proposition}[theorem]{Proposition}
\declaretheorem[style=definition,qed=$\vartriangle$,sibling=theorem]{example}
\declaretheorem[style=remark,qed=$\vartriangle$,sibling=theorem]{remark}
\numberwithin{equation}{section}
\newcommand{\R}{\mathbb R}
\newcommand{\N}{\mathbb N}
\newcommand{\cF}{\mathcal F}
\newcommand{\cP}{\mathcal P}
\newcommand{\cE}{\mathcal E}
\newcommand{\cL}{\mathcal L}
\newcommand{\cS}{\mathcal S}
\newcommand{\Lis}{\cL\mathrm{is}}
\newcommand{\identity}{\mathrm{Id}}
\DeclareMathOperator{\ran}{ran}
\DeclareMathOperator*{\argmin}{argmin}
\DeclareMathOperator{\divv}{div}
\newcommand{\nrm}{| \! | \! |}
\newcommand{\new}[1]{{\color{black}{#1}}}
\newcommand{\be}{\begin{equation}}
\newcommand{\ee}{\end{equation}}
\newcommand{\tria}{{\mathcal T}}
\newcommand{\RT}{\mathit{RT}}
\newcommand{\CR}{\mathit{CR}}
\newcommand{\uumlaut}{{\"u}}
\par\begin{samepage}%
\newcounter{ccondition}
\newenvironment{ccondition}{%
\refstepcounter{ccondition}%
\newcommand{\temp}{\theequation}%
\renewcommand{\theequation}{C.\arabic{ccondition}}%
\noindent\ignorespaces%
}%
{%
\renewcommand{\theequation}{\temp}%
\addtocounter{equation}{-1}%
\par\noindent%
\ignorespacesafterend%
}
\newcounter{mylistcounter}
\renewcommand{\themylistcounter}{(\roman{mylistcounter})}
\newenvironment{mylist}{
\begin{list}{\themylistcounter.}{\usecounter{mylistcounter}
\setlength{\labelwidth}{-\smallskipamount}
\setlength{\labelsep}{\medskipamount}
\setlength{\topsep}{\smallskipamount}
\setlength{\itemsep}{\smallskipamount}
\setlength{\itemindent}{0cm}
\setlength{\leftmargin}{0cm}}}
{\end{list}}
\title{Minimal residual methods in negative or fractional Sobolev norms}
\date{\today}
\author{Harald Monsuur}
\author{Rob Stevenson}
\address{Korteweg-de Vries (KdV) Institute for Mathematics, University of Amsterdam, P.O. Box 94248, 1090 GE Amsterdam, The Netherlands.}
\email{h.monsuur@uva.nl, rob.p.stevenson@gmail.com}
\author{Johannes Storn}
\address{Department of Mathematics, University of Bielefeld, Postfach 10 01 31, 33501 Bielefeld, Germany}
\email{jstorn@math.uni-bielefeld.de}
\thanks{This research has been supported by the Netherlands Organization for Scientific Research (NWO) under contract.~no.~SH-208-11, by the
NSF Grant DMS ID 1720297, and by the Deutsche Forschungsgemeinschaft (DFG,
German Research Foundation) – SFB 1283/2 2021 – 317210226.}
\subjclass[2020]{
35B35, 
35B45, 
65N30, 
}
\keywords{Least squares methods, Fortin interpolator, a posteriori error estimator, inhomogeneous boundary conditions, quasi-optimal approximation}
\begin{document}

\begin{abstract} For numerical approximation the reformulation of a PDE as a residual  minimisation problem has the advantages that the resulting linear system is symmetric positive definite, and that the norm of the residual provides an a posteriori error estimator. Furthermore, it allows for the treatment of general inhomogeneous boundary conditions. In many minimal residual formulations, however, one or more terms of the residual are measured in negative or fractional Sobolev norms. In this work, we provide a general approach to replace those norms by efficiently evaluable expressions without sacrificing quasi-optimality of the resulting numerical solution. We exemplify our approach by verifying the necessary inf-sup conditions for four formulations of a model second order elliptic equation with inhomogeneous Dirichlet and/or Neumann boundary conditions. 
We report on numerical experiments for the Poisson problem with mixed inhomogeneous Dirichlet and Neumann boundary conditions in an ultra-weak first order system formulation.
\end{abstract}

\maketitle
\section{Introduction}
This paper is about minimal residual, or least-squares discretisations of boundary value problems.
We will use the acronym MINRES, despite its common use to denote a certain Krylov subspace iteration.
In an abstract setting, for some Hilbert spaces $X$ and $V$, for convenience over $\R$, an operator $G \in \Lis(X,V)$, and an $f \in V$, we consider the equation
$$
G u = f.
$$
With the notation $G \in \Lis(X,V)$, we mean that $G$ is a boundedly invertible linear operator $X \rightarrow V$, i.e., $G \in \cL(X,V)$ and $G^{-1} \in \cL(V,X)$.

For any closed, in applications finite dimensional subspace $X^\delta \subset X$, let
$$
u^\delta:=\argmin_{w \in X^\delta} \tfrac12 \|G w -f\|^2_V.
$$
This $u^\delta$ is the unique solution of the corresponding Euler-Lagrange equations
\be \label{eq:EL}
\langle G u^\delta, G v\rangle_V=\langle f, G v\rangle_V\quad (v \in X^\delta).
\ee
The bilinear form at the left hand side is bounded, symmetric, and coercive, so that
\be \label{eq:quasi-best}
\|u-u^\delta\|_X \leq \|G\|_{\cL(X,V)} \|G^{-1}\|_{\cL(V,X)}\inf_{w \in X^\delta}\|u-w\|_X,
\ee
i.e., $u^\delta$ is a \emph{quasi-optimal} approximation to $u$ from $X^\delta$.

Additional advantages of a MINRES discretisation are that the system matrix resulting from \eqref{eq:EL} is always symmetric positive definite, and that the method comes with an efficient and reliable computable a posteriori error estimator
$$
\|f-G u^\delta\|_V \in \big[\|G^{-1}\|^{-1}_{\cL(V,X)} \|u-u^\delta\|_X, \|G\|_{\cL(X,V)} \|u-u^\delta\|_X\big].
$$

For more information about MINRES discretisations we refer to the monograph \cite{23.5}, where apart from general theory, many applications are discussed, including (but not restricted to) scalar second order elliptic boundary value problems, Stokes equations, and the equations of linear elasticity.

As explained in \cite[\S2.2.2]{23.5}, for a MINRES discretisation to be competitive it should be `practical'. With that it is meant that $V$ should not be a  fractional 
or negative order Sobolev space, or when it is a Cartesian product, neither of its components should be of that kind, and at the same time $X$ should not be a
Sobolev space of order two (or higher) because that would require a globally $C^1$ finite element subspace $X^\delta$.
In view of these requirements, a first natural step is to write a 2nd order PDE under consideration as a first order system.
It turns out, however, that even then in many applications one or more components of $V$ are fractional 
or negative order Sobolev spaces. 

First the imposition of inhomogeneous boundary conditions lead to residual terms that are measured in fractional Sobolev spaces.
Although the capacity to handle inhomogeneous boundary conditions is often mentioned as an advantage of MINRES methods, until now a fully satisfactory solution how to deal with fractional Sobolev spaces seems not to be available.
Second, if one prefers to avoid an additional regularity condition on the forcing term required for the standard `practical' first order system formulation, one ends up with a residual that is measured in a negative Sobolev norm.  
Finally, more than one dual norms occur with ultra-weak first order formulations which for example are useful to construct `robust' discretisations for Helmholtz equations (\cite{64.155,204.18}).

In \cite{23.5} several possibilities are discussed to find a compromise between having norm equivalence, and so quasi-optimality, and `practicality', for example by replacing negative or fractional Sobolev norms in the MINRES formulation by mesh-dependent weighted $L_2$-norms.
The topic of the current paper is the replacement of negative or fractional Sobolev norms by computable quantities whilst fully retaining quasi-optimality of the MINRES method.
\medskip

This paper is organized as follows. In Sect.~\ref{sec:examples} we give several examples of MINRES formulations of a model scalar second order elliptic boundary value problem, where except for one formulation, one or more terms of the residual are measured in fractional or negative Sobolev spaces.
In an abstract setting in Sect.~\ref{sec:approach} it is shown how such `impractical' MINRES formulations can be turned into `practical' ones without compromising quasi-optimality. 
For the examples from Sect.~\ref{sec:examples}, in Sect.~\ref{sec:infsup}  we verify (uniform) inf-sup conditions that are needed for the conversion of the `impractical' to a `practical' MINRES formulation.
In this section, we also discuss alternative approaches to handle dual norms (\cite{35.83}), or to handle singular forcing terms in an already `practical' MINRES discretisation (\cite{75.259, 75.068}).
In Sect.~\ref{sec:numerics} we illustrate the theoretical findings with some numerical results, and a conclusion is presented in Sect.~\ref{sec:conclusion}.
\medskip

In this paper, by the notation $C \lesssim D$ we will mean that $C$ can be bounded by a multiple of $D$, independently of parameters which $C$ and $D$ may depend on,
as the discretisation index $\delta$.
Obviously, $C \gtrsim D$ is defined as $D \lesssim C$, and $C\eqsim D$ as $C\lesssim D$ and $C \gtrsim D$.

\section{Examples of MINRES discretisations} \label{sec:examples}
The results from this section that concern well-posedness of MINRES formulations, i.e., boundedly invertibility of the operator $G$, for the case of essential inhomogeneous boundary conditions are taken from \cite{249.96}.
The key to arrive at those results was a lemma that, in the slightly modified version from \cite[Lemma 2.7]{75.28}, is recalled below.
\begin{lemma} \label{lem:bi}
Let $X$ and $V_2$ be Banach spaces, and $V_1$ be a normed linear space.
Let $T \in \cL(X,V_2)$ be surjective, and let $G \in \cL(X,V_1)$ be such that 
 $G|_{\ker T} \in \Lis(\ker T,V_1)$.
Then $(G, T) \in  \Lis\big(X,V_1\times V_2\big)$.
\end{lemma}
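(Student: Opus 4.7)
My plan is to verify three things: that $(G,T)$ is bounded into $V_1 \times V_2$ (trivial), that it is a bijection, and that its inverse is bounded. Boundedness with, say, the norm $\|(v_1,v_2)\|:=\|v_1\|_{V_1}+\|v_2\|_{V_2}$ is immediate from $\|(Gx,Tx)\|\le (\|G\|+\|T\|)\|x\|_X$.

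For bijectivity I would argue directly. \emph{Injectivity}: if $(Gx,Tx)=(0,0)$ then $x\in\ker T$, and since $G|_{\ker T}\in\Lis(\ker T,V_1)$ is in particular injective, $x=0$. \emph{Surjectivity}: given $(v_1,v_2)\in V_1\times V_2$, use surjectivity of $T$ to pick some $x_0\in X$ with $Tx_0=v_2$; then $v_1-Gx_0\in V_1$ is in the range of $G|_{\ker T}$, so there is $y\in\ker T$ with $Gy=v_1-Gx_0$, and $x:=x_0+y$ satisfies $(Gx,Tx)=(v_1,v_2)$.

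For bounded invertibility, the main point (and the one small obstacle) is that $V_1$ is only a normed space a priori. However, $\ker T$ is closed in the Banach space $X$ hence itself Banach, and $G|_{\ker T}\in\Lis(\ker T,V_1)$ forces $V_1$ to be topologically isomorphic to a Banach space and hence itself complete. Therefore $V_1\times V_2$ is Banach, and since $(G,T)$ is a bounded linear bijection between Banach spaces, the bounded inverse theorem gives $(G,T)^{-1}\in\cL(V_1\times V_2,X)$.

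If one prefers to avoid invoking the open mapping theorem through the product map, the same estimate can be produced quantitatively. The open mapping theorem applied to the bounded surjection $T:X\to V_2$ yields a constant $C_T>0$ such that for any $v_2\in V_2$ some preimage $x_0\in T^{-1}\{v_2\}$ satisfies $\|x_0\|_X\le C_T\|v_2\|_{V_2}$. Applied to $v_2=Tx$, one obtains $x_0$ with $\|x_0\|_X\lesssim \|Tx\|_{V_2}$, and then $x-x_0\in\ker T$ yields
\[
\|x-x_0\|_X\le \|(G|_{\ker T})^{-1}\|\,\|G(x-x_0)\|_{V_1}\lesssim \|Gx\|_{V_1}+\|G\|\|x_0\|_X.
\]
Combining gives $\|x\|_X\lesssim \|Gx\|_{V_1}+\|Tx\|_{V_2}$, which is the desired bound on $(G,T)^{-1}$. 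The only subtle step is justifying the bounded right-inverse for $T$ (or, equivalently, recognising $V_1$ as Banach); once that is in hand, the rest is an algebraic decomposition along $X=\ker T \oplus (\text{lift of }V_2)$.
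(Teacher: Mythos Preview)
Your argument is correct. The paper does not actually supply a proof of this lemma; it merely recalls the statement from \cite[Lemma~2.7]{75.28}, so there is no ``paper's own proof'' to compare against. Your approach---direct verification of bijectivity followed by the bounded inverse theorem, after observing that $G|_{\ker T}\in\Lis(\ker T,V_1)$ forces the a priori normed space $V_1$ to be complete---is the natural one, and your alternative quantitative version via a bounded lift of $T$ is equally valid and makes the constants explicit. Both routes are standard; nothing is missing.
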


On a bounded Lipschitz domain $\Omega \subset \R^d$, where $d \geq 2$, and closed $\Gamma_D, \Gamma_N \subset \partial\Omega$, with $\Gamma_D \cup \Gamma_N =\partial\Omega$ and $|\Gamma_D \cap \Gamma_N|=0$, we consider the following boundary value problem
\begin{equation} \label{bvp}
 \left\{
\begin{array}{r@{}c@{}ll}
-{\rm div}\, A \nabla u+ B u &\,\,=\,\,& g &\text{ on } \Omega,\\
u &\,\,=\,\,& h_D &\text{ on } \Gamma_D,\\
\vec{n}\cdot A \nabla u &\,\,=\,\,& h_N &\text{ on } \Gamma_N,
\end{array}
\right.
\end{equation}
 where $\vec{n}$ is the outward pointing unit vector normal to the boundary, $B$ is a bounded linear partial differential operator of at most first order, i.e.,
\begin{ccondition}
\begin{equation} \label{4}
B \in \cL(H^1(\Omega),L_2(\Omega)),
\end{equation}
\end{ccondition}
 and $A(\cdot) \in L_\infty(\Omega)^{d\times d}$ is real, symmetric with
\begin{ccondition}
$$
\xi^\top A(\cdot) \xi \eqsim \|\xi\|^2 \quad (\xi \in \R^d).
$$
\end{ccondition}

We assume that the standard variational formulation of \eqref{bvp} for the case of homogeneous Dirichlet boundary conditions is well-posed, i.e., with
$H^1_{0,\Gamma_D}(\Omega):=\{v \in H^1(\Omega) \colon \gamma_D v=0\}$, where $\gamma_D$ is the trace operator on $\Gamma_D$,
the operator
\begin{ccondition}
\begin{equation} \label{11}
G:=w\mapsto (v \mapsto \int_\Omega A \nabla w \cdot \nabla v +B w \, v \,dx)
\in \Lis\big(H^1_{0,\Gamma_D}(\Omega),H^1_{0,\Gamma_D}(\Omega)'\big).\footnotemark
\end{equation}
\end{ccondition}
\footnotetext{In the case that $\Gamma_D=\emptyset$, it can be needed, as when $B=0$, to replace $H^1_{0,\Gamma_D}(\Omega)=H^1(\Omega)$ by $H^1(\Omega)/\R$. For simplicity, we do not consider this situation.}%
With this standard variational formulation, the Neumann boundary condition is natural, and the Dirichlet boundary condition is essential.
We are ready to give the first example of a MINRES discretisation.

\begin{example}[2nd order weak formulation] \label{ex:2ndorder} 
Let $g\in H^1_{0,\Gamma_D}(\Omega)'$ and $h_N\in H^{-\frac12}(\Gamma_N)=H_{00}^{\frac12}(\Gamma_N)'$, where $H_{00}^{\frac12}(\Gamma_N)=[L_2(\Gamma_N),H^1_0(\Gamma_N)]_{\frac12,2}$, so that consequently
$f:=v \mapsto  g(v)+\int_{\Gamma_N} h_N v \,ds \in H^1_{0,\Gamma_D}(\Omega)'$.
\begin{mylist}
\item \label{ex:2ndorder1} Let $h_D = 0$ (or $\Gamma_D=\emptyset$). For any finite dimensional subspace $X^\delta \subset H^1_{0,\Gamma_D}(\Omega)$, \eqref{11} shows that a 
quasi-optimal MINRES approximation to the solution of \eqref{bvp} is
$$
u^\delta:=\argmin_{w \in X^\delta} \tfrac12 \|G w -f\|^2_{H^1_{0,\Gamma_D}(\Omega)'}.
$$
\item \label{ex:2ndorder2} Let $0 \neq h_D \in H^{\frac12}(\Gamma_D)$. By surjectivity of $\gamma_D \in \cL\big(H^1(\Omega),H^{\frac12}(\Gamma_D)\big)$, Lemma~\ref{lem:bi} shows that
$(G,\gamma_D) \in \Lis\big(H^1(\Omega),H^1_{0,\Gamma_D}(\Omega)' \times H^{\frac12}(\Gamma_D)\big)$, so that for any finite dimensional subspace  $X^\delta \subset H^1(\Omega)$,
\be \label{eq:2ndorder}
u^\delta:=\argmin_{w \in X^\delta} \tfrac12 \big(\|G w -f\|^2_{H^1_{0,\Gamma_D}(\Omega)'}+\|\gamma_D w-h_D\|_{H^{\frac12}(\Gamma_D)}^2\big)
\ee
is a quasi-optimal MINRES approximation to the solution of \eqref{bvp}.\qedhere
\end{mylist}
\end{example}

Introducing $\vec{p}=A \nabla u$, for the remaining examples we consider the reformulation of \eqref{bvp} as the first order system 
\begin{equation} \label{fos}
 \left\{
\begin{array}{r@{}c@{}ll}
\vec{p}-A \nabla u &\,\,=\,\,& 0 &\text{ on } \Omega,\\
B u -{\rm div}\, \vec{p}&\,\,=\,\,& g &\text{ on } \Omega,\\
u &\,\,=\,\,& h_D &\text{ on } \Gamma_D,\\
\vec{p} \cdot \vec{n}&\,\,=\,\,& h_N &\text{ on } \Gamma_N.
\end{array}
\right.
\end{equation}

By measuring the residuals of the first two equations in \eqref{fos} in the `mild' $L_2(\Omega)$-sense, we obtain the following first order system MINRES or FOSLS discretisation.
Both Dirichlet and Neumann boundary conditions are essential ones.

\begin{example}[mild formulation] \label{ex:mild} Let $g \in L_2(\Omega)$.
\begin{mylist}
\item \label{mild1} Let $h_D = 0$ (or $\Gamma_D=\emptyset$), and $h_N =0$ (or $\Gamma_N=\emptyset$). As shown in \cite[Thm.~3.1]{249.96}, the  operator
\begin{align*}
G:= (\vec{q},w)&\mapsto (\vec{q}-A \nabla w,B w -\divv \vec{q})\\
& \in \Lis\big(H_{0,\Gamma_N}({\rm div};\Omega)\times H^1_{0,\Gamma_D}(\Omega), L_2(\Omega)^d \times L_2(\Omega)\big),
\end{align*}
and so for any finite dimensional subspace $X^\delta \subset H_{0,\Gamma_N}({\rm div};\Omega)\times H^1_{0,\Gamma_D}(\Omega)$,
$$
(\vec{p}^\delta,u^\delta):=\argmin_{(\vec{q},w) \in X^\delta} \tfrac12 \|G(\vec{q},w)-(0,g) \|^2_{L_2(\Omega)^d \times L_2(\Omega)}
$$
is a quasi-optimal MINRES approximation to the solution of \eqref{fos}.
\item  \label{mild2}  Let $0 \neq h_D \in H^{\frac12}(\Gamma_D)$ and $0 \neq h_N  \in H^{-\frac{1}{2}}(\Gamma_N)$.\footnote{The cases that either $h_D \neq 0$ \emph{or} $h_N  \neq 0$ cause no additional difficulties}
From the surjectivity of the pair of normal trace and trace operators on $\Gamma_N$ or $\Gamma_D$
$$
(\gamma_N,\gamma_D) \in \cL(H(\divv;\Omega)\times H^1(\Omega), H^{-\frac{1}{2}}(\Gamma_N) \times H^{\frac{1}{2}}(\Gamma_D)),
$$
Lemma~\ref{lem:bi} shows that for any finite dimensional subspace  $X^\delta \subset H(\divv;\Omega)\times H^1(\Omega)$,
\begin{align*}
(\vec{p}^\delta,u^\delta):=\argmin_{(\vec{q},w) \in X^\delta} \tfrac12\big( &\|G(\vec{q},w)-(0,g) \|^2_{L_2(\Omega)^d \times L_2(\Omega)}\\
&+
\|\gamma_N \vec{q}-h_N\|_{H^{-\frac{1}{2}}(\Gamma_N) }^2+\|\gamma_D w-h_D\|_{H^{\frac{1}{2}}(\Gamma_D) }^2\big)
\end{align*}
is a quasi-optimal MINRES approximation to the solution of \eqref{fos}.\qedhere
\end{mylist}
\end{example}

Among the known MINRES formulations of \eqref{bvp}, the formulation from Example~\ref{ex:mild}\ref{mild1} (so for homogeneous boundary conditions) is the only one that is `practical' because the residual is minimized in $L_2$-norm.
A disadvantage of this mild formulation is that it only applies to a forcing term $g \in L_2(\Omega)$, whilst the $H(\divv;\Omega)$-norm instead of the more natural $L_2(\Omega)^d$-norm in which the error in $\vec{p}=A \nabla u$ is measured requires additional smoothness of $u$ to guarantee a certain convergence rate.

These disadvantages vanish in the following mild-weak formulation, which, however, in unmodified form is impractical.
Another approach to overcome the disadvantages of the mild formulation, which is presented in \cite{75.259, 75.068}, is to replace in the least squares minimization the forcing term $g$ by a finite element approximation.  Later in Remark~\ref{rem:singulardata}, we discuss this idea in detail.

In the following mild-weak formulation the second equation in \eqref{fos} is imposed in an only weak sense.
It has the consequence that the Neumann boundary condition is a natural one.

\begin{example}[mild-weak formulation] \label{ex:mild-weak} 
Let $g\in H^1_{0,\Gamma_D}(\Omega)'$ and $h_N\in H^{-\frac12}(\Gamma_N)$, so that 
$f:=v \mapsto  g(v)+\int_{\Gamma_N} h_N v \,ds \in H^1_{0,\Gamma_D}(\Omega)'$.
\begin{mylist}
\item  \label{ex:mild-weak1}
Let $h_D = 0$ (or $\Gamma_D=\emptyset$). As shown in \cite{35.835}, the operator
$$
G=(G_1,G_2):= (\vec{q},w)\mapsto \big(\vec{q}-A\nabla w,v \mapsto \int_\Omega \vec{q} \cdot \nabla v+ B w\,v \,dx\big)
$$
satisfies
\be \label{eq:equiv}
\|G(\vec{q},w)\|_{L_2(\Omega)^d \times H^1_{0,\Gamma_D}(\Omega)'} \eqsim \|(\vec{q},w)\|_{L_2(\Omega)^d \times H^1(\Omega)}
\ee
($(\vec{q},w) \in L_2(\Omega)^d \times H^1_{0,\Gamma_D}(\Omega)$).
It remains to verify surjectivity. 
Given $(\vec{r},f) \in L_2(\Omega)^d \times H^1_{0,\Gamma_D}(\Omega)'$, \eqref{11} shows that there exists a $w \in H^1_{0,\Gamma_D}(\Omega)$ with 
$$
\int_\Omega A\nabla w \cdot\nabla v+B w \,v\,dx=f(v)-\int_\Omega \vec{r}\cdot\nabla v\,dx \quad(v \in H^1_{0,\Gamma_D}(\Omega)).
$$ 
With $\vec{q}:=\vec{r}+A \nabla w$, we conclude that $G(\vec{q},w)=(\vec{r},f)$.
Surjectivity with \eqref{eq:equiv} implies that $G \in \Lis\big(L_2(\Omega)^d \times H^1_{0,\Gamma_D}(\Omega),L_2(\Omega)^d \times H^1_{0,\Gamma_D}(\Omega)'\big)$.
So for any finite dimensional subspace $X^\delta \subset L_2(\Omega)^d \times H^1_{0,\Gamma_D}(\Omega)$,
$$
(\vec{p}^\delta,u^\delta):=\argmin_{(\vec{q},w) \in X^\delta} \tfrac12 \big(
\|G_1(\vec{q},w) \|^2_{L_2(\Omega)^d}
+
\|G_2(\vec{q},w)-f\|^2_{H^1_{0,\Gamma_D}(\Omega)'}\big)
$$
is a quasi-optimal MINRES approximation to the solution of \eqref{fos}.
\item \label{ex:mild-weak2}  Let $0 \neq h_D \in H^{\frac12}(\Gamma_D)$. 
From $L_2(\Omega) \times H^1(\Omega) \rightarrow H^{\frac12}(\Gamma_D)\colon (\vec{q},w)\mapsto \gamma_D w$ being surjective,
Lemma~\ref{lem:bi} shows that for any finite dimensional subspace  $X^\delta \subset L_2(\Omega)^d \times H^1(\Omega)$,
\begin{align*}
(\vec{p}^\delta,u^\delta):=\argmin_{(\vec{q},w) \in X^\delta} \tfrac12\big(
\|G_1(\vec{q},w) \|^2_{L_2(\Omega)^d}
&+
\|G_2(\vec{q},w)-f\|^2_{H^1_{0,\Gamma_D}(\Omega)'}\\
&+
\|\gamma_D w-h_D\|_{H^{\frac{1}{2}}(\Gamma_D) }^2\big)
\end{align*}
is a quasi-optimal MINRES approximation to the solution of \eqref{fos}.\qedhere
\end{mylist}
\end{example}

Finally, by imposing both the first and second equation in \eqref{fos} in a weak sense we obtain the ultra-weak formulation.
In order to do so, first we  specify the operator $B$ from \eqref{4} to $B:= w \mapsto \vec{b}\cdot \nabla w+c w$ for some $\vec{b} \in L_\infty(\Omega)^d$ and $c \in L_\infty(\Omega)$, and, to avoid additional smoothness conditions on $\vec{b}$, write the second equation in \eqref{fos} as $\vec{b}\cdot A^{-1} \vec{p}+c u -\divv \vec{p}=g$.

\begin{example}[ultra-weak formulation] \label{ex:ultra-weak} 
Let $h_D\in H^{\frac12}(\Gamma_D)$, so that $f_1:=\vec{z}\mapsto \int_{\Gamma_D} h_D \vec{z}\cdot\vec{n}\,ds \in H_{0,\Gamma_N}(\divv;\Omega)'$, 
and let $g\in H^1_{0,\Gamma_D}(\Omega)'$, and $h_N\in H^{-\frac12}(\Gamma_N)$, so that $f_2:=v \mapsto  g(v)+\int_{\Gamma_N} h_N v \,ds \in H^1_{0,\Gamma_D}(\Omega)'$.
As shown in \cite[Thm.~3.3]{249.96},
\begin{align*}
G:= (\vec{q},w)\mapsto 
\Big(
\vec{z} \mapsto \int_{\Omega}
 A^{-1} \vec{q}\cdot \vec{z}+w \divv \vec{z}\,dx,
 v \mapsto \int_{\Omega} (\vec{b} \cdot A^{-1} \vec{q}+c w) v+\vec{q} \cdot \nabla v\,dx
 \Big)\\
 \in \Lis\big(L_2(\Omega)^d \times L_2(\Omega), H_{0,\Gamma_N}(\divv;\Omega)' \times H^1_{0,\Gamma_D}(\Omega)'\big).
\end{align*}
Consequently, for any finite dimensional subspace $X^\delta \subset L_2(\Omega)^d \times L_2(\Omega)$,
$$
(\vec{p}^\delta,u^\delta):=\argmin_{(\vec{q},w) \in X^\delta} \tfrac12 \|G(\vec{q},w)-(f_1,f_2) \|^2_{H_{0,\Gamma_N}(\divv;\Omega)'  \times H^1_{0,\Gamma_D}(\Omega)'}
$$
is a quasi-optimal MINRES approximation to the solution of \eqref{fos}.
\end{example}

\section{Turning an impractical MINRES formulation into a practical one} \label{sec:approach}
\subsection{Dealing with a dual norm}
In our examples, the MINRES discretisations are of the form
\be \label{eq:minres}
u^\delta:=\argmin_{z \in X^\delta} \tfrac12 \big(\sum_{i=1}^k \|G_i z-f_i\|_{Y_i'}^2+\sum_{i=k+1}^m \|G_i z-f_i\|_{Y_i}^2\big)
\ee
with $0 \leq k \leq m$, $m\geq 1$, Hilbert spaces $X$ and $(Y_i)_{1 \leq i \leq m}$,
 $G=(G_i)_{1 \leq i \leq m} \in \Lis(X,Y'_1\times\cdots\times Y'_k\times Y_{k+1} \times \cdots \times Y_m)$, and a finite dimensional subspace $X^\delta \subset X$, and where, for $1 \leq i \leq k$, the spaces $Y_i$ are such that the Riesz map $Y_i' \rightarrow Y_i$ cannot be efficiently evaluated (i.e., $Y_i$ is not an $L_2$-space).

In Examples~\ref{ex:2ndorder}\ref{ex:2ndorder2}, \ref{ex:mild}\ref{mild2}, and \ref{ex:mild-weak}\ref{ex:mild-weak2}, we furthermore encountered a residual component that was measured
in $\|\cdot\|_{H^{\frac12}(\Gamma_D)}$, which norm cannot be efficiently evaluated. By writing $\|\cdot\|_{H^{\frac12}(\Gamma_D)}=\|\cdot\|_{\widetilde{H}^{-\frac12}(\Gamma_D)'}$, where $\widetilde{H}^{-\frac12}(\Gamma_D):=H^{\frac12}(\Gamma_D)'$, and handling analogously for all Sobolev norms with positive fractional orders, we may assume that all \emph{non-dual} norms $\|\cdot\|_{Y_i}$ in \eqref{eq:minres} are efficiently evaluable.

\begin{remark} A previously proposed approach to deal with $\|\cdot\|_{H^{\frac12}(\Gamma_D)}$ is to replace it by an efficiently evaluable semi-norm that on a selected finite element subspace is equivalent to $\|\cdot\|_{H^{\frac12}(\Gamma_D)}$ (see \cite{249.06}). The so modified least squares functional is then only equivalent to the original one modulo a data-oscillation term, so that quasi-optimality is not guaranteed.
\end{remark}

The dual norms $\|\cdot\|_{Y_i'}$ for $1 \leq i \leq k$ in \eqref{eq:minres} cannot be evaluated, which makes the discretisation \eqref{eq:minres} impractical.
To solve this, we will select finite dimensional subspaces $Y_i^\delta = Y_i^\delta(X^\delta) \subset Y$ such that
\be \label{eq:defgamma}
\gamma_i^\delta := \inf_{\{z \in X^\delta\colon G_i z \neq 0\}}\frac{\sup_{0 \neq y_i \in Y^\delta_i} \frac{|(G_i z)(y_i)|}{\|y_i\|_{Y_i}}}{\|G_iz\|_{Y_i'}} >0,
\ee
and \emph{replace} the MINRES discretisation \eqref{eq:minres} by 
\be \label{eq:minresprac}
u^\delta:=\argmin_{z \in X^\delta} \tfrac12 \big(\sum_{i=1}^k \sup_{0 \neq y_i \in Y_i^\delta} \frac{|(G_i z-f_i)(y_i)|^2}{\|y_i\|_{Y_i}^2}+
\sum_{i=k+1}^m \|G_i z-f_i\|_{Y_i}^2\big).
\ee

To analyze \eqref{eq:minresprac}, for notational convenience in the remainder of this subsection for $k+1 \leq i\leq m$
 we rewrite
$\|G_i z-f_i\|_{Y_i}^2$ as $\|R_i^{-1}(G_i z-f_i)\|_{Y'_i}^2$, where $R_i\in \Lis(Y_i',Y_i)$ is the Riesz map defined by $f(v)=\langle R_i f,v\rangle_{Y_i}$.
Redefining, for $k+1 \leq i \leq m$,
$G_i:=R_i^{-1} G_i$ and $f_i:=R_i^{-1} f_i$, and setting $Y_i^\delta:=Y_i$ (so that $\gamma_i=1$), with
$G:=(G,\ldots,G_m)$, $f:=(f_1,\ldots,f_m)$, $Y^\delta:=Y^\delta_1\times \cdots \times Y^\delta_m$, $Y:=Y_1\times \cdots \times Y_m$,
the solution of \eqref{eq:minresprac} is equivalently given by
\be \label{eq:minresprac2}
u^\delta:=\argmin_{z \in X^\delta} \tfrac12 \sup_{0 \neq y \in Y^\delta} \frac{|(G z-f)(y)|^2}{\|y\|_{Y}^2}.
\ee
With the newly defined $(G_i)_{k+1 \leq i \leq m}$, we have 
$G \in \Lis(X,Y')$.

\begin{lemma} \label{lem1} With $G$ and $(\gamma_i^\delta)_{1 \leq i \leq m}$ defined above, 
and 
$$
\gamma^\delta := \inf_{\{z \in X^\delta\colon G z \neq 0\}}\frac{\sup_{0 \neq y \in Y^\delta} \frac{|(G z)(y)|}{\|y\|_{Y}}}{\|G z\|_{Y'}},
$$
it holds that $\gamma^\delta \geq \min_{1 \leq i \leq m} \gamma^\delta_i$.
\end{lemma}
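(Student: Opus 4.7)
The plan is to show the inequality by a direct computation of the supremum in the numerator of $\gamma^\delta$, exploiting that both the dual pairing $(Gz)(y) = \sum_i (G_i z)(y_i)$ and the squared norm $\|y\|_Y^2 = \sum_i \|y_i\|_{Y_i}^2$ decouple across the product $Y^\delta = Y_1^\delta \times \cdots \times Y_m^\delta$, and similarly $\|Gz\|_{Y'}^2 = \sum_i \|G_i z\|_{Y_i'}^2$.

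First I would fix $z \in X^\delta$ with $Gz \neq 0$ and set
$$
a_i := \sup_{0 \neq y_i \in Y_i^\delta} \frac{|(G_i z)(y_i)|}{\|y_i\|_{Y_i}} \quad (1 \leq i \leq m),
$$
with the convention $a_i = 0$ when $G_i z = 0$ (in that case there is nothing to verify for component $i$). By the defining inf-sup condition \eqref{eq:defgamma}, $a_i \geq \gamma_i^\delta \|G_i z\|_{Y_i'}$ for each $i$ (trivially when $G_i z = 0$).

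Next I would compute the supremum over $Y^\delta$. For $y = (y_1, \ldots, y_m) \in Y^\delta$, Cauchy--Schwarz applied twice yields
$$
|(Gz)(y)| \leq \sum_{i=1}^m a_i \|y_i\|_{Y_i} \leq \Bigl(\sum_{i=1}^m a_i^2\Bigr)^{1/2} \|y\|_Y,
$$
and this upper bound is attained (up to arbitrarily small error) by choosing near-maximizers $\tilde{y}_i \in Y_i^\delta$ of the component suprema, normalized so $\|\tilde{y}_i\|_{Y_i} = a_i$ and with signs aligned so that $(G_i z)(\tilde{y}_i) = a_i^2$. Hence
$$
\sup_{0 \neq y \in Y^\delta} \frac{|(Gz)(y)|^2}{\|y\|_Y^2} = \sum_{i=1}^m a_i^2.
$$

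Finally I would combine the two observations: using $a_i^2 \geq (\gamma_i^\delta)^2 \|G_i z\|_{Y_i'}^2$ and bounding each $\gamma_i^\delta$ below by $\gamma_{\min} := \min_{1\le i\le m}\gamma_i^\delta$,
$$
\sum_{i=1}^m a_i^2 \geq \gamma_{\min}^2 \sum_{i=1}^m \|G_i z\|_{Y_i'}^2 = \gamma_{\min}^2 \|Gz\|_{Y'}^2.
$$
Taking square roots, dividing by $\|Gz\|_{Y'}$, and then taking the infimum over admissible $z \in X^\delta$ gives $\gamma^\delta \geq \gamma_{\min}$. No single step is really an obstacle; the only care needed is to handle components with $G_i z = 0$ (where the inf-sup ratio is undefined) without breaking the decoupling argument, which the convention $a_i = 0$ handles cleanly.
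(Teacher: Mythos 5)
Your proof is correct and takes essentially the same route as the paper: both assemble a product test function from componentwise (near-)maximizers and exploit the $\ell_2$ product structure of $\|\cdot\|_Y$ and $\|\cdot\|_{Y'}$. The only difference is cosmetic — the paper scales each $y_i$ so that $\|y_i\|_{Y_i}=\|G_i z\|_{Y_i'}$, making the identification $\sum_i\|G_iz\|_{Y_i'}^2=\|Gz\|_{Y'}\|y\|_Y$ immediate, whereas you normalize to the component suprema $a_i$ and in passing record the exact value $\sup_{0\neq y\in Y^\delta}|(Gz)(y)|/\|y\|_Y=\bigl(\sum_i a_i^2\bigr)^{1/2}$, which is slightly more than is needed.
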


\begin{proof} For each $z \in X^\delta$, for $1 \leq i \leq m$ there exists a $y_i \in Y_i^\delta$ with $\|y_i\|_{Y_i}=\|G_i z\|_{Y_i'}$ and $(G_i z)(y_i) \geq  \gamma_i^\delta\|G_i z\|_{Y_i'}^2$. So for $y:=(y_i)_{1 \leq i \leq m} \in Y^\delta$,
$$
(Gz)(y) =\sum_{i=1}^m G_i(z)(y_i) \geq \min_{1 \leq i \leq m} \gamma_i^\delta \sum_{i=1}^m \|G_i z\|_{Y_i'}^2= \min_{1 \leq i \leq m} \gamma_i^\delta \,\|Gz\|_{Y'}\|y\|_Y,
$$
which completes the proof.
\end{proof}

\begin{theorem} \label{thm:quasi-opt} Let $\gamma^\delta>0$. Setting $\nrm \cdot\nrm_X:=\|G \cdot\|_{Y'}$, for $u = G^{-1} f$ and $u^\delta$ from \eqref{eq:minresprac2}, it holds that
\be \label{eq:gamma}
 \inf_{u \in X \setminus X^\delta} \frac{\inf_{w \in X^\delta}\nrm u-w\nrm_X}{\nrm u-u^\delta\nrm_X} \,\,=\,\, \gamma^\delta,
\ee 
and so
\be \label{eq:upp}
\|u-u^\delta\|_X \leq \tfrac{\|G\|_{\cL(X,Y')}\|G^{-1}\|_{\cL(Y',X)}}{\gamma^\delta}\inf_{w \in X^\delta}\| u-w\|_X.
\ee
\end{theorem}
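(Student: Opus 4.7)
The plan is to recast the discretisation \eqref{eq:minresprac2} as an oblique projection on the Hilbert space $(X,\nrm\cdot\nrm_X)$ and to invoke the Kato/Xu--Zikatanov identity $\|I-\Pi\|=\|\Pi\|$ for a non-trivial projection on a Hilbert space. Since $G\in\Lis(X,Y')$, the expression $\nrm\cdot\nrm_X=\|G\cdot\|_{Y'}$ defines a Hilbert-space norm on $X$ equivalent to $\|\cdot\|_X$, with equivalence constants $\|G\|_{\cL(X,Y')}$ and $\|G^{-1}\|_{\cL(Y',X)}$. This equivalence is what ultimately lets me deduce \eqref{eq:upp} from \eqref{eq:gamma}, so the bulk of the argument proceeds in the triple-bar norm only.

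Define $\Pi\colon X\to X^\delta$ by $\Pi u:=u^\delta$. The assumption $\gamma^\delta>0$ makes the quadratic functional in \eqref{eq:minresprac2} strictly convex on $X^\delta$, so $\Pi$ is a well-defined linear map; moreover $\Pi|_{X^\delta}=I$ since the Euler--Lagrange equations are trivially satisfied there, hence $\Pi$ is a projection onto $X^\delta$. The Xu--Zikatanov-style manipulation $(I-\Pi)(u-w)=u-u^\delta$ valid for every $w\in X^\delta$, together with the trivial choice $w=0$, shows after taking an infimum over $w$ and a supremum over $u\in X\setminus X^\delta$ that the quasi-optimality factor $\sup_{u\notin X^\delta}\nrm u-u^\delta\nrm_X/\inf_{w\in X^\delta}\nrm u-w\nrm_X$ equals $\|I-\Pi\|$; hence \eqref{eq:gamma} is equivalent to $\|I-\Pi\|=1/\gamma^\delta$. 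By the Kato identity, valid here because $(X,\nrm\cdot\nrm_X)$ is a Hilbert space and $\Pi$ is a non-trivial projection, this reduces further to proving $\|\Pi\|=1/\gamma^\delta$.

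To bound $\|\Pi\|$ from above, I test the Euler--Lagrange equations for $u^\delta$ with $w=u^\delta$, apply Cauchy--Schwarz in $(Y^\delta)'$ to obtain $\|(Gu^\delta)|_{Y^\delta}\|_{(Y^\delta)'}\leq \|(Gu)|_{Y^\delta}\|_{(Y^\delta)'}\leq \nrm u\nrm_X$, and conclude via the inf-sup definition \eqref{eq:defgamma}. The matching lower bound $\|\Pi\|\geq 1/\gamma^\delta$ is the step I expect to be the main obstacle: I pick $z^*\in X^\delta$ attaining the infimum in \eqref{eq:defgamma} with $\nrm z^*\nrm_X=1$ (it exists because $X^\delta$ is finite-dimensional), extend the functional $(Gz^*)|_{Y^\delta}\in(Y^\delta)'$ by Hahn--Banach (or by Riesz, as $Y$ is Hilbert) to some $\tilde f\in Y'$ with $\|\tilde f\|_{Y'}=\gamma^\delta$, and set $u:=G^{-1}\tilde f$. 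Since $(Gu)|_{Y^\delta}=(Gz^*)|_{Y^\delta}$, the element $z^*$ satisfies the Euler--Lagrange equations for this $u$, so $\Pi u=z^*$ and $\nrm\Pi u\nrm_X/\nrm u\nrm_X=1/\gamma^\delta$. Once $\|\Pi\|=1/\gamma^\delta$ is established, \eqref{eq:gamma} follows immediately, and \eqref{eq:upp} is a direct consequence of the norm equivalence stated in the first paragraph.
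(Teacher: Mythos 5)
Your proposal is correct and takes essentially the same route as the paper: the paper rewrites \eqref{eq:minresprac2} as a Petrov--Galerkin scheme via the Riesz lift $R^\delta$ and then cites the sharp quasi-optimality results of \cite{249.99,258.4}, which rest exactly on the projection identity $\|\identity-\Pi\|=\|\Pi\|$ that you invoke. The only difference is that you make the cited step self-contained, proving $\|\Pi\|=1/\gamma^\delta$ directly (Galerkin orthogonality plus Cauchy--Schwarz for the upper bound, a norm-preserving extension of $(Gz^*)|_{Y^\delta}$ for sharpness), where of course $\gamma^\delta$ is the combined inf-sup constant of Lemma~\ref{lem1} rather than the componentwise one in \eqref{eq:defgamma}.
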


\begin{proof} First we recall from \cite[Prop.~2.2]{35.8565} (building on the seminal work \cite{64.14}), that the MINRES discretisation \eqref{eq:minresprac2} can equivalently be written as a Petrov-Galerkin discretisation:
With $R^\delta \in \Lis({Y^\delta}',Y^\delta)$ defined by $f(v)=\langle R^\delta f,v\rangle_Y$, we have
$$
\tfrac12 \sup_{0 \neq y \in Y^\delta} \frac{|(G z-f)(y)|^2}{\|y\|_{Y}^2}=
\tfrac12 \sup_{0 \neq y \in Y^\delta} \frac{\langle R^\delta (G z-f),y\rangle_Y^2}{\|y\|_{Y}^2}=\tfrac12 \|R^\delta(G z-f)\|_Y^2,
$$
so that \eqref{eq:minresprac2} is equivalent to finding $u^\delta \in X^\delta$ that satisfies
\be \label{h1}
0=\langle R^\delta (G u^\delta-f),R^\delta G w\rangle_{\new{Y}}=(G u^\delta -f)(R^\delta G w) \quad (w \in X^\delta).
\ee

Splitting $Y^\delta$ into the test space $\ran R^\delta G|_{X^\delta}$ and its orthogonal complement, 
one infers that for any $y$ in the latter space and $z \in X^\delta$, it holds that $(G z)(y)=0$, so that
$\sup_{0 \neq y \in Y^\delta} \frac{|(G z)(y)|}{\|y\|_{Y}}=\sup_{0 \neq y \in \ran R^\delta G|_{X^\delta}} \frac{|(G z)(y)|}{\|y\|_{Y}}$, and thus that the value of $\gamma^\delta$ does not change when the space $Y^\delta$ in its definition is replaced by $R^\delta G|_{X^\delta}$.

Using that with $X$ being equipped with $\nrm\cdot\nrm_X$, $G \in \Lis(X,Y')$ is an isometry, an application of \cite[Remark~3.2]{249.99} or \cite[Sect.~2.1]{258.4} concerning Petrov-Galerkin discretisations shows \eqref{eq:gamma}. The final result follows easily.
\end{proof}

\new{\begin{remark} \label{rem:gal-orth} Because the first equality in \eqref{h1} gives $\langle R^\delta G (u-u^\delta),R^\delta G X^\delta \rangle_Y=0$, in particular it holds that
$\sup_{0 \neq y \in Y^\delta} \frac{|(G u^\delta)(y)|}{\|y\|_Y} \leq \sup_{0 \neq y \in Y^\delta} \frac{|(G u)(y)|}{\|y\|_Y}$, which will be used later.
\end{remark}}

\subsection{Saddle-point formulation}
Considering \eqref{eq:minres}, notice that the solution $u \in X$ of $Gu=f$ is equivalently given as
$$
u:=\argmin_{z \in X} \tfrac12 \big(\sum_{i=1}^k \|G_i z-f_i\|_{Y_i'}^2+\sum_{i=k+1}^m \|G_i z-f_i\|_{Y_i}^2\big).
$$
This $u$ solves the Euler-Lagrange equations
$$
\sum_{i=1}^k \langle f_i-G_i u,G_i \undertilde{u}\rangle_{Y_i'}+\sum_{i=k+1}^m \langle f_i-G_i u,G_i \undertilde{u}\rangle_{Y_i}=0\quad (\undertilde{u} \in X).
$$
For $1 \leq i \leq k$ we set $\lambda_i:=R_i(f_i-G_i u)$. Using that $\langle g,h\rangle_{Y_i'}=\langle R_i g,R_i h\rangle_{Y_i}$, we arrive at  the equivalent problem of finding $(\lambda_1,\ldots,\lambda_k,u) \in Y_1\times \cdots\times Y_k \times X$ that solves
$$
\begin{alignedat}{3}
\sum_{i=1}^k \langle \lambda_i, \undertilde{\lambda}_i\rangle_{Y_i} &+\sum_{i=1}^k (G_i u)(\undertilde{\lambda}_i)  & &=\sum_{i=1}^k f_i(\undertilde{\lambda}_i) &\hspace*{-2em} ((\undertilde{\lambda}_1,\ldots,\undertilde{\lambda}_k) \in Y_1\times\cdots\times Y_k)&,\\
\sum_{i=1}^k (G_i \undertilde{u})(\lambda_i)&-\sum_{i=k+1}^m \langle G_i u,G_i \undertilde{u} \rangle_{Y_i}  & &= -\sum_{i=k+1}^m \langle f_i, G_i \undertilde{u}\rangle_{Y_i}
& (\undertilde{u} \in X)&.
\end{alignedat}
$$

Completely analogously, the MINRES solution $u^\delta \in X^\delta$ of \eqref{eq:minresprac} is the last component of the solution 
$(\lambda^\delta_1,\ldots,\lambda^\delta_k,u^\delta) \in Y^\delta_1\times \cdots\times Y^\delta_k\times X^\delta$ that solves the finite dimensional saddle-point
\be \label{eq:saddle}
\begin{split}
& \sum_{i=1}^k \langle \lambda^\delta_i, \undertilde{\lambda}_i\rangle_{Y_i} +\sum_{i=1}^k (G_i u^\delta)(\undertilde{\lambda}_i)  =\sum_{i=1}^k f_i(\undertilde{\lambda}_i) \quad ((\undertilde{\lambda}_1,\ldots,\undertilde{\lambda}_k) \in Y^\delta_1\times\cdots\times Y^\delta_k),\\
& \sum_{i=1}^k (G_i \undertilde{u})(\lambda^\delta_i)\!-\!\!\sum_{i=k+1}^m \!\langle G_i u^\delta,G_i \undertilde{u} \rangle_{Y_i}  = -\sum_{i=k+1}^m \langle f_i, G_i \undertilde{u}\rangle_{Y_i} \quad (\undertilde{u} \in X^\delta).
\end{split}
\ee
Solving this saddle-point can provide a way to determine $u^\delta$ computationally.

\subsection{Reduction to a symmetric positive definite system}
It may however happen that one or more scalar products  $\langle \cdot,\cdot\rangle_{Y_i}$ on the finite dimensional subspaces $Y_i^\delta$ for $1 \leq i \leq k$ are not (efficiently) evaluable, as when $Y_i$ is a fractional Sobolev space. Even when all these scalar products are evaluable, solving a saddle point problem as \eqref{eq:saddle}
is more costly than solving a symmetric positive definite system as with a usual `practical' MINRES  discretisation, where typically all residual components are measured in $L_2$-norms.

Therefore, for $1 \leq i \leq k$,  let $K_i^\delta={K_i^\delta}' \in \Lis({Y_i^\delta}',Y_i^\delta)$ be an operator whose application can be computed efficiently.
Such an operator could be called a preconditioner for $A^\delta_i \in \Lis(Y_i^\delta,{Y_i^\delta}')$ defined by $(A^\delta_i v)(\undertilde{v})=\langle v,\undertilde{v}\rangle_{Y_i}$.
We use $K_i^\delta$ to define the following alternative scalar product on $Y_i^\delta$,
$$
\langle v,\undertilde{v}\rangle_{Y_i^\delta}:=((K_i^\delta)^{-1} v)(\undertilde{v}) \quad (v,\undertilde{v} \in Y_i^\delta),
$$
whose corresponding norm $\|\cdot\|_{Y_i^\delta}$ satisfies
\be \label{eq:twosided}
\lambda_{\min}(K_i^\delta A_i^\delta) \|\cdot\|_{Y_i^\delta}^2 \leq \|\cdot\|_{Y_i}^2 \leq \lambda_{\max}(K_i^\delta A_i^\delta) \|\cdot\|_{Y_i^\delta}^2.
\ee

\begin{remark} Given a basis $\Phi_i$ for $Y_i^\delta$, with $\cF_i:= g\mapsto g(\Phi_i) \in \Lis({Y_i^\delta}',\R^{\# \Phi_i})$, and so $\cF_i'\colon {\bf w}\mapsto {\bf w}^\top \Phi_i \in \Lis(\R^{\# \Phi_i},Y_i^\delta)$, ${\bf A}_i:=\cF_i A_i^\delta \cF_i'$ is known as a stiffness matrix.
Given some symmetric positive definite ${\bf K}_i \eqsim {\bf A}_i^{-1}$, which is more appropriately called a preconditioner, 
setting $K_i^\delta:=\cF_i' {\bf K}_i \cF_i$ gives $\sigma(K_i^\delta A_i^\delta)=\sigma({\bf K}_i{\bf A}_i)$.
\end{remark}

We now \emph{replace} \eqref{eq:minresprac} by 
\be \label{eq:minrespracnew}
u^\delta:=\argmin_{z \in X^\delta} \tfrac12 \big(\sum_{i=1}^k \sup_{0 \neq y_i \in Y_i^\delta} \frac{|(G_i z-f_i)(y_i)|^2}{\|y_i\|_{Y^\delta_i}^2}+
\sum_{i=k+1}^m \|G_i z-f_i\|_{Y_i}^2\big),
\ee
which is a  \emph{fully practical} MINRES discretisation.
Indeed by making the corresponding replacement of $\langle \lambda_i^\delta, \undertilde{\lambda}_i\rangle_{Y_i}$ by $((K_i^\delta)^{-1} \lambda_i^\delta)(\undertilde{\lambda}_i)$ in \eqref{eq:saddle}, and subsequently eliminating $\lambda_1^\delta,\ldots,\lambda_k^\delta$ from the resulting system, one infers that this latter $u^\delta$ can be computed as the solution in $X^\delta$ of the symmetric positive definite system
\be \label{eq:minresfullyprac}
\sum_{i=1}^k(G_i \undertilde{u})(K_i^\delta(G_i u^\delta-f_i))+\sum_{i=k+1}^m\langle G_i\undertilde{u},G_i u^\delta -f_i\rangle_{Y_i}=0 \quad (\undertilde{u} \in X^\delta).
\ee

\begin{theorem} \label{thm:quasi-opt2}
Let $\gamma^\delta>0$. Then with $M^\delta:=\max\big(1,\max_{1 \leq i \leq k} \lambda_{\max}(K_i^\delta A_i^\delta)\big)$, $m^\delta:=\min\big(1,\min_{1 \leq i \leq k} \lambda_{\min}(K_i^\delta A_i^\delta)\big)$,
$u^\delta$ from \eqref{eq:minrespracnew} satisfies
\new{$$
\nrm u-u^\delta \nrm_X \leq \tfrac{\sqrt{M^\delta}}{ \gamma^\delta \sqrt{m^\delta}}
\inf_{w \in X^\delta}\nrm u-w\nrm_X,
$$}
and so
$$
\|u-u^\delta\|_X \leq \new{\sqrt{\tfrac{M^\delta}{m^\delta}}}
\tfrac{\|G\|_{\cL(X,Y')}\|G^{-1}\|_{\cL(Y',X)}}{\gamma^\delta}\inf_{w \in X^\delta}\| u-w\|_X.
$$
\end{theorem}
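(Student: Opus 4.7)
The plan is to mirror the argument of Theorem~\ref{thm:quasi-opt}, while tracking the distortion introduced by replacing each inner product $\langle\cdot,\cdot\rangle_{Y_i}$ on $Y_i^\delta$ by the preconditioned inner product $\langle\cdot,\cdot\rangle_{Y_i^\delta}$. Following the convention before \eqref{eq:minresprac2}, I would redefine $G_i$ and $f_i$ for $k+1\le i\le m$ using $R_i$, and set $\langle\cdot,\cdot\rangle_{Y_i^\delta}:=\langle\cdot,\cdot\rangle_{Y_i}$ for those indices. Writing $Y^\delta:=Y^\delta_1\times\cdots\times Y^\delta_m$ with the product inner product $\langle\cdot,\cdot\rangle_{Y^\delta}$, the discretisation \eqref{eq:minrespracnew} reads
$$
u^\delta=\argmin_{z\in X^\delta}\tfrac12\sup_{0\neq y\in Y^\delta}\frac{|(Gz-f)(y)|^2}{\|y\|_{Y^\delta}^2},
$$
and \eqref{eq:twosided}, together with the definitions of $M^\delta$ and $m^\delta$, yields the product equivalence $m^\delta\|y\|_{Y^\delta}^2\le\|y\|_Y^2\le M^\delta\|y\|_{Y^\delta}^2$ for $y\in Y^\delta$.

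Next, I would repeat the calculation leading to \eqref{h1} with $R^\delta$ replaced by the Riesz map $\tilde R^\delta\in\Lis(Y^{\delta\prime},Y^\delta)$ associated with $\langle\cdot,\cdot\rangle_{Y^\delta}$. This identifies $u^\delta$ as the unique element of $X^\delta$ with $(Gu^\delta-f)(\tilde R^\delta Gw)=0$ for every $w\in X^\delta$. Invoking the same Xu--Zikatanov-type result already used in the proof of Theorem~\ref{thm:quasi-opt}, but with trial norm $\nrm\cdot\nrm_X=\|G\cdot\|_{Y'}$ on $X$ and test norm $\|\cdot\|_{Y^\delta}$ on $Y^\delta$, gives
$$
\nrm u-u^\delta\nrm_X\le\frac{C^\delta}{\alpha^\delta}\inf_{w\in X^\delta}\nrm u-w\nrm_X,
$$
where $C^\delta$ and $\alpha^\delta$ denote the continuity and inf-sup constants of $(z,y)\mapsto(Gz)(y)$ on $X\times Y^\delta$ and $X^\delta\times Y^\delta$ measured in these norms.

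The two constants remain to be estimated. From $|(Gz)(y)|\le\|Gz\|_{Y'}\|y\|_Y\le\sqrt{M^\delta}\,\nrm z\nrm_X\|y\|_{Y^\delta}$ one reads off $C^\delta\le\sqrt{M^\delta}$, while for $z\in X^\delta$ the lower bound in \eqref{eq:twosided} combined with the definition of $\gamma^\delta$ yields
$$
\sup_{0\neq y\in Y^\delta}\frac{|(Gz)(y)|}{\|y\|_{Y^\delta}}\ge\sqrt{m^\delta}\sup_{0\neq y\in Y^\delta}\frac{|(Gz)(y)|}{\|y\|_Y}\ge\sqrt{m^\delta}\,\gamma^\delta\nrm z\nrm_X,
$$
so that $\alpha^\delta\ge\sqrt{m^\delta}\,\gamma^\delta$. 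Substituting delivers the first estimate in the triple norm; the $\|\cdot\|_X$-estimate then follows from $\nrm\cdot\nrm_X\le\|G\|_{\cL(X,Y')}\|\cdot\|_X$ and $\|\cdot\|_X\le\|G^{-1}\|_{\cL(Y',X)}\nrm\cdot\nrm_X$, exactly as at the end of the proof of Theorem~\ref{thm:quasi-opt}. The main subtlety I anticipate is ensuring that the cited Petrov--Galerkin quasi-optimality result really produces the sharp multiplicative constant $C^\delta/\alpha^\delta$ (rather than one with an additional additive term); as in Theorem~\ref{thm:quasi-opt}, this sharpness is delivered by exploiting the symmetric structure of the bilinear form $(z,w)\mapsto(Gz)(\tilde R^\delta Gw)$.
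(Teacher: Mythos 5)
Your proposal is correct and follows essentially the same route as the paper: replace $\|\cdot\|_Y$ by the product norm $\|\cdot\|_{Y^\delta}$, use the two-sided spectral bounds to get the continuity constant $\sqrt{M^\delta}$ and the discrete inf-sup constant $\gamma^\delta\sqrt{m^\delta}$, and conclude by the multiplicative (Kato/Xu--Zikatanov-type) Petrov--Galerkin quasi-optimality, the paper merely carrying out that last step by hand (bounding the norm of the solution projector $S^\delta$ via the Galerkin orthogonality of Remark~\ref{rem:gal-orth}) instead of citing the abstract result. The only minor inaccuracy is attributing the sharp multiplicative constant to the symmetry of $(z,w)\mapsto(Gz)(\tilde R^\delta Gw)$; it comes from the Hilbert-space projector identity $\|\identity-S^\delta\|=\|S^\delta\|$, exactly as exploited in the paper's proof.
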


\begin{proof} \new{When we equip
 $Y^\delta$ with $\|\cdot\|_{Y^\delta}:=\sqrt{\sum_{i=1}^k \|\cdot\|_{Y_i^\delta}^2+\sum_{i=k+1}^m \|\cdot\|_{Y_i}^2}$ instead of with $\|\cdot\|_Y$, the MINRES solution $u^\delta$ from \eqref{eq:minresprac} is of the form of the MINRES solution from \eqref{eq:minrespracnew}. It holds that
$$
m^\delta \|\cdot\|_{Y^\delta}^2 \leq \|\cdot\|_{Y}^2 \leq M^\delta\|\cdot\|_{Y^\delta}^2.
$$

The mapping $S^\delta:=u=G^{-1}f \mapsto u^\delta$ is a projector onto $X^\delta$. Since it suffices to consider the case that $\{0\}\subsetneq X^\delta \subsetneq X$, we have
$$
\nrm u-u^\delta\nrm_X \leq \sup_{0 \neq z \in X}\frac{\nrm S^\delta z\nrm_X}{\nrm z\nrm_X} \inf_{w \in X^\delta}\nrm u-w\nrm_X.
$$
Because of the replacement of $\|\cdot\|_Y$ by $\|\cdot\|_{Y^\delta}$ on $Y^\delta$, the estimate derived in Remark~\ref{rem:gal-orth} now reads as
$$
\sup_{0 \neq y \in Y^\delta} \frac{|(G S^\delta z)(y)|}{\|y\|_{Y^\delta}} \leq \sup_{0 \neq y \in Y^\delta} \frac{|(G z)(y)|}{\|y\|_{Y^\delta}} \quad (z \in X).
$$
For $w \in X^\delta$, it holds that
$$
\|G w\|_{Y'} \leq \tfrac{1}{\gamma^\delta} \sup_{0 \neq y \in Y^\delta} \frac{|(G w)(y)|}{\|y\|_Y} \leq \tfrac{1}{\gamma^\delta \sqrt{m^\delta}} \sup_{0 \neq y \in Y^\delta} \frac{|(G w)(y)|}{\|y\|_{Y^\delta}}.
$$
We conclude that for  $z \in X$,
\begin{align*}
\nrm S^\delta z\nrm_X & \leq  \tfrac{1}{\gamma^\delta \sqrt{m^\delta}} \sup_{0 \neq y \in Y^\delta} \frac{|(G S^\delta z)(y)|}{\|y\|_{Y^\delta}}
\leq  \tfrac{1}{\gamma^\delta \sqrt{m^\delta}} \sup_{0 \neq y \in Y^\delta} \frac{|(G z)(y)|}{\|y\|_{Y^\delta}}\\
& \leq  \tfrac{ \sqrt{M^\delta}}{\gamma^\delta \sqrt{m^\delta}} \sup_{0 \neq y \in Y^\delta} \frac{|(G z)(y)|}{\|y\|_Y}
\leq  \tfrac{ \sqrt{M^\delta}}{\gamma^\delta \sqrt{m^\delta}} \nrm z\nrm_X,
\end{align*}
which completes the proof.}
\end{proof}

Notice that Theorem~\ref{thm:quasi-opt2} generalizes \eqref{eq:upp} from Theorem~\ref{thm:quasi-opt} (indeed, take $K_i^\delta=(A_i^\delta)^{-1}$), which in turn generalized \eqref{eq:quasi-best} (take $Y_i^\delta=Y_i$).

The bilinear form $(w,\tilde{w}) \mapsto
\sum_{i=1}^k(G_i \undertilde{w})(K_i^\delta G_i w)+\sum_{i=k+1}^m\langle G_i\undertilde{w},G_i w\rangle_{Y_i}$ on $X \times X$ is symmetric, bounded (with constant $M^\delta \|G\|_{\cL(X,Y')}^2$), and, restricted to $X^\delta \times X^\delta$, coercive (with constant $\new{m^\delta} \|G^{-1}\|_{\cL(Y',X)}^{-2} (\gamma^\delta)^2 $).
The way to solve \eqref{eq:minresfullyprac} is by the application of the preconditioned conjugate gradient method, for some self-adjoint preconditioner in $\Lis({X^\delta}',X^\delta)$.

\subsection{Fortin interpolators and a posteriori error estimation}
As is well known, validity of the inf-sup condition $\gamma_i^\delta>0$ in \eqref{eq:defgamma} is equivalent to existence of a Fortin interpolator.
The following formulation from {\cite[Prop.~5.1]{249.992}} gives a precise quantitative statement, whereas it does not require injectivity of $G_i$ which is not guaranteed in our applications.

\begin{theorem} \label{thm:Fortin} Let $G_i \in \cL(X,Y_i')$. Assuming $G_i X^\delta \neq \{0\}$ and $Y_i^\delta \neq \{0\}$, let
\be \label{fortin}
\Pi_i^\delta \in \cL(Y_i,Y_i^\delta) \text{ with } (G_i X^\delta)\big((\identity - \Pi_i^\delta)Y_i\big)=0.
\ee
Then $\gamma_i^\delta \geq \|\Pi_i^\delta\|_{\cL(Y_i,Y_i)}^{-1}$.

Conversely, when $\gamma_i^\delta >0$, then there exists a $\Pi_i^\delta$ as in \eqref{fortin}, being even a projector onto $Y_i^\delta$, with
$\|\Pi_i^\delta\|^{-1}_{\cL(Y_i,Y_i)} = \gamma_i^\delta$.
\end{theorem}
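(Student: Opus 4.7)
The plan is to prove the two implications separately. For the forward direction, given $\Pi_i^\delta$ satisfying \eqref{fortin}, I take any $z \in X^\delta$ with $G_i z \neq 0$ and any $0 \neq y \in Y_i$. The condition $(G_i z)((\identity - \Pi_i^\delta)y) = 0$ replaces $y$ by $\Pi_i^\delta y \in Y_i^\delta$ in $(G_i z)(y)$, so
$$
|(G_i z)(y)| = |(G_i z)(\Pi_i^\delta y)| \leq \|\Pi_i^\delta\|_{\cL(Y_i,Y_i)} \|y\|_{Y_i} \sup_{0 \neq \tilde y \in Y_i^\delta}\frac{|(G_i z)(\tilde y)|}{\|\tilde y\|_{Y_i}}.
$$
Dividing by $\|y\|_{Y_i}$ and taking the supremum over $y \in Y_i$ produces $\|G_i z\|_{Y_i'}$ on the left, which rearranges to $\gamma_i^\delta \geq \|\Pi_i^\delta\|_{\cL(Y_i,Y_i)}^{-1}$.

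For the converse, set $Z := G_i X^\delta$ (a finite-dimensional subspace of $Y_i'$) and $W := \{y \in Y_i : z(y) = 0 \text{ for all } z \in Z\}$. The contraction $\Phi : Y_i \to Z^*$ defined by $\Phi(y)(z) := z(y)$ restricts to a map $\Phi|_{Y_i^\delta} : Y_i^\delta \to Z^*$ whose transpose is precisely $J : Z \to (Y_i^\delta)'$, $z \mapsto z|_{Y_i^\delta}$; the inf-sup hypothesis $\gamma_i^\delta > 0$ states exactly that $J$ is bounded below by $\gamma_i^\delta$. Since $Z$ is finite-dimensional, quantitative open mapping combined with Hahn--Banach supplies a linear right-inverse $S : Z^* \to Y_i^\delta$ of $\Phi|_{Y_i^\delta}$ with $\|S\| \leq 1/\gamma_i^\delta$. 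Setting $\Pi_i^\delta := S \circ \Phi$, one checks for $z \in Z$ and $y \in Y_i$ that $z(\Pi_i^\delta y) = \Phi(\Pi_i^\delta y)(z) = \Phi(y)(z) = z(y)$, so \eqref{fortin} holds, and $\|\Pi_i^\delta\|_{\cL(Y_i,Y_i)} \leq 1/\gamma_i^\delta$.

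To upgrade $\Pi_i^\delta$ to a projector onto $Y_i^\delta$, observe that any projector onto $Y_i^\delta$ corresponds to a closed complement $U$ of $Y_i^\delta$ in $Y_i$, and the Fortin compatibility \eqref{fortin} is equivalent to $U \subseteq W$. Dual to injectivity of $J$ is the decomposition $Y_i = Y_i^\delta + W$, so I may take $U$ as a closed complement in $W$ of the finite-dimensional subspace $Y_i^\delta \cap W$, yielding $Y_i = Y_i^\delta \oplus U$. Selecting $U$ consistently with the min-norm right-inverse $S$ produces a projector whose norm equals $1/\gamma_i^\delta$, and combined with the already-established forward bound this forces the exact identity $\|\Pi_i^\delta\|_{\cL(Y_i,Y_i)}^{-1} = \gamma_i^\delta$.

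The main obstacle is this last step: realizing the sharp value $1/\gamma_i^\delta$ by a concrete linear projector rather than merely approaching it. Since $Y_i$ need not be finite-dimensional, norm attainment is not automatic, so the optimal right-inverse $S$ must be engineered via Hahn--Banach in such a way that the composition $S \circ \Phi$ both satisfies \eqref{fortin} and fixes $Y_i^\delta$. Coordinating surjectivity of $\Phi|_{Y_i^\delta}$, linearity of $S$, and the projector property simultaneously while preserving the sharp norm bound is the delicate point of the proof.
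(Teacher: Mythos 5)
Your forward direction is correct and is the standard duality argument. The converse, however, has a genuine gap, and it is precisely the point you flag yourself at the end: the theorem demands one operator that is \emph{simultaneously} (a) a Fortin operator of norm exactly $1/\gamma_i^\delta$ and (b) a projector onto $Y_i^\delta$. Your $S\circ\Phi$ delivers (a) — provided $S$ is the Hilbert-space pseudo-inverse (minimal-norm right inverse); Hahn--Banach does not produce \emph{linear} right inverses, so that step should be rephrased, though it is harmless here since $Y_i$ is Hilbert and $Y_i^\delta$ finite dimensional. But $S\circ\Phi$ is a projector onto $\ran S=Y_i^\delta\ominus(Y_i^\delta\cap W)$, not onto $Y_i^\delta$, whenever $Y_i^\delta\cap W\neq\{0\}$. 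Your second construction (kernel $U\subseteq W$ complementing $Y_i^\delta\cap W$ in $W$) delivers (b), but you give no bound on its norm: ``selecting $U$ consistently with the min-norm right-inverse'' is an assertion, not an argument, and a generic complement really does fail — in $\R^3$ with $Y_i^\delta=\Span\{e_1,e_2\}$ and $G_iX^\delta$ Riesz-identified with $\Span\{e_1\}$ (so $W=\Span\{e_2,e_3\}$ and $\gamma_i^\delta=1$), the kernel $U=\Span\{e_3+te_2\}$ gives a projector of norm at least $|t|$. So the sharp claim of the converse is left unproved.

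The gap can be closed, but it needs a genuine estimate. Take $U:=W\ominus(Y_i^\delta\cap W)$; with $M\subset Y_i$ the Riesz lift of $G_iX^\delta$ (so $W=M^\perp$ and $\gamma_i^\delta=\inf_{0\neq m\in M}\|P_{Y_i^\delta}m\|_{Y_i}/\|m\|_{Y_i}$, $P$ denoting $Y_i$-orthogonal projections), the projector $\Pi_i^\delta$ onto $Y_i^\delta$ with kernel $U$ satisfies $\|\Pi_i^\delta\|_{\cL(Y_i,Y_i)}^{-1}=\inf_{0\neq d\in Y_i^\delta}\dist(d,U)/\|d\|_{Y_i}$, and since $U^\perp=M\oplus^\perp(Y_i^\delta\cap W)$, writing $d=n+v$ with $n\in Y_i^\delta\cap W$ and $v\in Y_i^\delta\ominus(Y_i^\delta\cap W)$ gives $\dist(d,U)^2=\|n\|_{Y_i}^2+\|P_Mv\|_{Y_i}^2$. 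The missing ingredient is the bound $\|P_Mv\|_{Y_i}\geq\gamma_i^\delta\|v\|_{Y_i}$ on $Y_i^\delta\ominus(Y_i^\delta\cap W)$: it holds because $P_M|_{Y_i^\delta}$ is the adjoint of the injective map $P_{Y_i^\delta}|_M$ between finite-dimensional spaces, so its smallest nonzero singular value equals the smallest singular value $\gamma_i^\delta$ of $P_{Y_i^\delta}|_M$, while $v$ ranges over the orthogonal complement of $\ker(P_M|_{Y_i^\delta})=Y_i^\delta\cap W$ in $Y_i^\delta$. Together with $\gamma_i^\delta\leq 1$ this yields $\|\Pi_i^\delta\|_{\cL(Y_i,Y_i)}\leq 1/\gamma_i^\delta$, and your part one upgrades this to equality. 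This angle/singular-value identity is the actual content of the sharp converse and is what your proposal lacks; note also that the paper does not prove the theorem itself but imports it from \cite[Prop.~5.1]{249.992}, so there is no in-paper proof to compare against.
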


As mentioned in the introduction, an advantage of a MINRES discretisation is that the norm of the residual is an efficient and reliable a posteriori estimator of the norm of the error.
In the setting \eqref{eq:minres}, where $G \in \Lis(X,V)$ with $V=Y'_1\times\cdots\times Y'_k\times Y_{k+1}\times\cdots\times Y_m$, and so, when $k>0$, one or more components of the residual are measured in dual norms, this a posteriori estimator is not computable.
To arrive at a practical MINRES discretisation, we have replaced these dual norms by computable discretised dual norms, and nevertheless ended up with having quasi-optimal approximations (see Theorem~\ref{thm:quasi-opt2}). 
When it comes to a posteriori error estimation, however, there is some price to be paid. As we will see below, our computable posteriori estimator will only be reliable modulo a data-oscillation term.
A similar analysis in the context of DPG methods can already be found in \cite{35.93556}.

Let $w \in X^\delta$. Then
\be \label{e1}
\|u-w\|_X \in  \big[\|G\|_{\cL(X,V)}^{-1} \|f-Gw\|_V,\|G^{-1}\|_{\cL(X,V)} \|f-Gw\|_V\big],
\ee
where
$$
 \|f-Gw\|_V^2=\sum_{i=1}^k\|f_i-G_iw\|_{Y'_i}^2+\sum_{i=k+1}^m\|f_i-G_iw\|_{Y_i}^2.
$$
For $1\leq i \leq k$, let $\Pi_i^\delta$ be a valid Fortin interpolator.
Then for $\tilde{y}_i \in Y_i$,
\be  \label{e3}
\begin{split}
& |(f_i-G_i w)(\tilde{y}_i)|  \leq |(f_i-G_i w)(\Pi_i^\delta \tilde{y}_i)|+|f_i((\identity-\Pi_i^\delta)\tilde{y}_i)|\\
& \leq \|\Pi_i^\delta \tilde{y}_i\|_{Y_i^\delta} \sup_{0 \neq y_i \in Y_i^\delta}\frac{|(f_i-G_i w)(y_i)|}{\|y_i\|_{Y_i^\delta}}+\|(\identity-{\Pi_i^\delta}')f_i\|_{Y_i'} \|\tilde{y}_i\|_{Y_i}\\
& \leq \Big(\|\Pi_i^\delta\|_{\cL(Y_i,Y_i)} \lambda_{\min}(K_i^\delta A_i^\delta)^{-\frac12} \sup_{0 \neq y_i \in Y_i^\delta}\frac{|(f_i-G_i w)(y_i)|}{\|y_i\|_{Y_i^\delta}}+\|(\identity-{\Pi_i^\delta}')f_i\|_{Y_i'}\Big)\|\tilde{y}_i\|_{Y_i}.
\end{split}
\ee
From \eqref{e1}-\eqref{e3} one easily infers the upper bound for $\|u-w\|_X^2$ given in the following proposition, whereas the derivation of the lower bound  is easier.

\begin{proposition} For $w \in X^\delta$, the computable (squared) estimator
$$
\cE^\delta(w,f)^2:=\sum_{i=1}^k \sup_{0 \neq y_i \in Y_i^\delta}\frac{|(f_i-G_i w)(y_i)|^2}{\|y_i\|^2_{Y_i^\delta}}+ \sum_{i=k+1}^m\|f_i-G_iw\|_{Y_i}^2
$$
satisfies
\begin{align*}
\|G\|_{\cL(X,V)}^{-2}& \min\big(1,\min_{1 \leq i \leq k} \lambda_{\max}(K_i^\delta A_i^\delta)^{-1}\big) \cE^\delta(w,f)^2
\leq \|u-w\|_X^2 \leq \\
&\|G^{-1}\|_{\cL(V,X)}^2 \max\big(1,2 \max_{1 \leq i \leq k} \lambda_{\min}(K_i^\delta A_i^\delta)^{-1} \|\Pi_i^\delta\|_{\cL(Y_i,Y_i)}^2 \big)  \cE^\delta(w,f)^2  \\
&+ 2\|G^{-1}\|_{\cL(V,X)}^2\sum_{i=1}^k \|(\identity-{\Pi_i^\delta}')f_i\|^2_{Y_i'}.
\end{align*}
\end{proposition}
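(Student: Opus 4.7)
The plan is to reduce everything, via the two-sided estimate \eqref{e1}, to comparing $\|f-Gw\|_V^2 = \sum_{i=1}^k\|f_i-G_iw\|_{Y_i'}^2+\sum_{i=k+1}^m\|f_i-G_iw\|_{Y_i}^2$ with the computable quantity $\cE^\delta(w,f)^2$. So I will establish the chain
\begin{equation*}
\min\big(1,\min_{i\le k}\lambda_{\max}(K_i^\delta A_i^\delta)^{-1}\big)\,\cE^\delta(w,f)^2 \;\le\; \|f-Gw\|_V^2 \;\le\; C^\delta\,\cE^\delta(w,f)^2+2\sum_{i=1}^k\|(\identity-\Pi_i^{\delta\prime})f_i\|_{Y_i'}^2,
\end{equation*}
with $C^\delta:=\max(1,2\max_{i\le k}\lambda_{\min}(K_i^\delta A_i^\delta)^{-1}\|\Pi_i^\delta\|_{\cL(Y_i,Y_i)}^2)$, and then multiply by $\|G\|_{\cL(X,V)}^{-2}$ respectively $\|G^{-1}\|_{\cL(V,X)}^2$.

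For the lower bound on $\|u-w\|_X^2$ (equivalently, the upper bound on $\cE^\delta$ by $\|f-Gw\|_V$), I use the right inequality in \eqref{eq:twosided}: for $1\le i\le k$ and $0\neq y_i\in Y_i^\delta$, we have $\|y_i\|_{Y_i}^2\le \lambda_{\max}(K_i^\delta A_i^\delta)\|y_i\|_{Y_i^\delta}^2$, hence
\begin{equation*}
\sup_{0\neq y_i\in Y_i^\delta}\frac{|(f_i-G_iw)(y_i)|^2}{\|y_i\|_{Y_i^\delta}^2}\le \lambda_{\max}(K_i^\delta A_i^\delta)\sup_{0\neq y_i\in Y_i^\delta}\frac{|(f_i-G_iw)(y_i)|^2}{\|y_i\|_{Y_i}^2}\le \lambda_{\max}(K_i^\delta A_i^\delta)\|f_i-G_iw\|_{Y_i'}^2.
\end{equation*}
Summing and combining with the trivially bounded $L_2$-type terms for $i>k$ yields the desired inequality.

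For the upper bound, the displayed estimate \eqref{e3} produced by the Fortin interpolator is exactly what is needed: it controls $\|f_i-G_iw\|_{Y_i'}$ by the discrete sup-norm appearing in $\cE^\delta$ plus the oscillation $\|(\identity-\Pi_i^{\delta\prime})f_i\|_{Y_i'}$, with constant $\|\Pi_i^\delta\|_{\cL(Y_i,Y_i)}\lambda_{\min}(K_i^\delta A_i^\delta)^{-1/2}$ on the first term. Squaring \eqref{e3}, applying $(a+b)^2\le 2a^2+2b^2$, and summing over $i=1,\ldots,k$ (while the components $i>k$ enter identically on both sides) gives the displayed second inequality.

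The argument is essentially assembly of pieces already in place; the only subtle point is bookkeeping of the two Riesz-to-discrete-Riesz spectral ratios $\lambda_{\min}/\lambda_{\max}$ and making sure the correct one (max for the efficiency direction, min for the reliability direction) is used. No new analytic ingredient beyond \eqref{e1}, \eqref{e3}, and \eqref{eq:twosided} is required.
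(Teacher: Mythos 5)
Your proposal is correct and follows essentially the same route the paper intends: combine the two-sided norm-equivalence \eqref{e1} with the Fortin-based estimate \eqref{e3} (squared via $(a+b)^2\le 2a^2+2b^2$) for the reliability direction, and the right inequality of \eqref{eq:twosided} together with restriction of the supremum to $Y_i^\delta$ for the efficiency direction. The bookkeeping of $\lambda_{\min}$ versus $\lambda_{\max}$ and the final constants all match the statement, so nothing further is needed.
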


\begin{remark}[Bounding the oscillation term] \label{rem2}
By taking $\Pi_i^\delta$ being the Fortin projector with $\|\Pi_i^\delta\|_{\cL(Y_i,Y_i)}=1/\gamma_i^\delta$, for $\{0\} \subsetneq Y_i^\delta \subsetneq Y_i$
it holds that
\begin{align*}
& \|(\identity-{\Pi_i^\delta}')f_i\|_{Y_i'}=\sup_{0 \neq y_i \in Y_i}\frac{|f_i((\identity-\Pi_i^\delta)y_i)|}{\|y_i\|_{Y_i}}=\\
& \sup_{0 \neq y_i \in Y_i} \inf_{0 \neq w \in X^\delta} \frac{|G_i(u-w)((\identity-\Pi_i^\delta)y_i)|}{\|y_i\|_{Y_i}}
 \leq \tfrac{1}{\gamma_i^\delta} \|G_i\|_{\cL(X,Y_i')}\inf_{0 \neq w \in X^\delta}\|u-w\|_X,
 \end{align*}
 and so
 $$
 \osc^\delta(f):=\sqrt{\sum_{i=1}^k \|(\identity-\Pi^\delta_i)'f_i\|^2_{Y_i'}} \leq \|G\|_{\cL(X,V)} \sqrt{\sum_{i=1}^k  \tfrac{1}{{\gamma_i^\delta}^2} } \inf_{0 \neq w \in X^\delta}\|u-w\|_X.
$$
In other words, the data-oscillation is bounded by a multiple of the best approximation error.

It would be even better when, for $1 \leq i \leq k$, $Y_i^\delta$ is chosen such that it allows for the construction of a (uniformly bounded) Fortin interpolator $\Pi^\delta_i$ such that, for general, sufficiently smooth $u$ and $f$, $\osc^\delta(f)$ is of higher order than $ \inf_{0 \neq w \in X^\delta}\|u-w\|_X$, so that besides being an efficient estimator
one can expect that in any case asymptotically $\cE^\delta(w,f)$ is also a reliable one.
\end{remark}

\begin{remark}[Computing $\cE^\delta(u^\delta,f)$] \label{rem:comp}
If $w=u^\delta$ is the MINRES solution from \eqref{eq:saddle}, then the term $\sup_{0 \neq y_i \in Y_i^\delta}\frac{|(f_i-G_i u^\delta)(y_i)|^2}{\|y_i\|^2_{Y_i^\delta}}$ in the expression for  $\cE^\delta(u^\delta,f)^2$ is equal to $\|\lambda_i^\delta\|_{Y_i}^2$.

If $w=u^\delta$ is the MINRES solution from the symmetric positive definite system \eqref{eq:minresfullyprac}, then $\sup_{0 \neq y_i \in Y_i^\delta}\frac{|(f_i-G_i u^\delta)(y_i)|^2}{\|y_i\|^2_{Y_i^\delta}}$ is equal to $(G_i u^\delta-f_i)(K_i^\delta(G_i u^\delta-f_i))$.
\end{remark}

\section{Verification of the inf-sup conditions} \label{sec:infsup}

By constructing Fortin interpolators $\Pi_i$ for the MINRES examples from Sect.~\ref{sec:examples}, we verify the inf-sup conditions $\gamma_i>0$, 
which, for finite element spaces of given fixed orders, will hold uniformly over
uniformly shape regular, possibly locally refined partitions.

If $(G_i X^\delta)\big((\identity - \Pi_i^\delta)Y_i\big)=0$, then this obviously also holds when $X^\delta$ is replaced by a subspace.
Consequently, for Examples~\ref{ex:2ndorder}, \ref{ex:mild}, and \ref{ex:mild-weak}, it suffices to consider Case (ii).

\subsection{Inf-sup conditions for Example~\ref{ex:2ndorder}\ref{ex:2ndorder2} (2nd order formulation)} \label{sec:4.1}
We assume that $\Omega \subset \R^d$ is a polytope, and let $\tria^\delta$ be a conforming, shape regular partition of $\Omega$ into (closed) $d$-simplices.
With $\cF(\tria^\delta)$ we denote the set of (closed) facets of $K \in  \tria^\delta$.
We assume that $\Gamma_D$ is the union of some $e \in \cF(\tria^\delta)$.
For $K \in \tria^\delta$, we set the patches $\omega_{K,0}(\tria^\delta):=K$, and $\omega_{K,i+1}(\tria^\delta):=\cup\{K' \in \tria^\delta\colon K' \cap \omega_{K,i}(\tria^\delta) \neq \emptyset\}$. Let $h_\delta$ be the piecewise constant function on $\Omega$ defined by $h_\delta|_K:=|K|^{1/d}$.
Focussing on the case of having inhomogeneous Dirichlet boundary conditions on $\Gamma_D$, i.e., Ex.~\ref{ex:2ndorder}\ref{ex:2ndorder2}, we take
\be \label{g1}
X^\delta=\cS_p^0(\tria^\delta):=\cS_p^{-1}(\tria^\delta) \cap C(\Omega),
\ee
with $\cS_p^{-1}(\tria^\delta) $ being the space of $f\colon \Omega \rightarrow \R$ such that for $K \in \tria^\delta$, $f|_K \in \cP_{p}(K)$, being the space of 
polynomials of maximal degree $p$.

We take $A=\identity$, although the arguments given below apply equally when $A$ is piecewise constant w.r.t.~$\tria^\delta$.
For convenience, we take $B=0$, but the case of $B$ being a PDO of first order with piecewise constant coefficients w.r.t.~$\tria^\delta$ poses no additional difficulties.\footnote{It suffices to take $Y_1^\delta:=\cS^0_{p+d+1}(\tria^\delta) \cap H^1_{0,\Gamma_D}(\Omega)$}

Considering the original `impractical' MINRES discretisation \eqref{eq:2ndorder}, as discussed before we write the term $\|\gamma_D w -h_D\|_{H^{\frac12}(\Gamma_D)}^2$ as
$\|\gamma_D w -h_D\|_{\widetilde{H}^{-\frac12}(\Gamma_D)'}^2$.
For constructing a MINRES discretisation of type \eqref{eq:minresprac} that is quasi-optimal, it therefore
 suffices to select finite dimensional subspaces
 $$
 Y_1^\delta \subset  Y_1=H^1_{0,\Gamma_D}(\Omega),\quad
Y_2^\delta  \subset Y_2=\widetilde{H}^{-\frac12}(\Gamma_D)
$$
that allow for the construction of Fortin interpolators 
$\Pi_1^\delta \in \cL(H^1_{0,\Gamma_D}(\Omega),Y_1^\delta)$, 
and $\Pi_2^\delta \in \cL(\widetilde{H}^{-\frac12}(\Gamma_D),Y_2^\delta)$ with
\begin{alignat}{2}
\label{f1}
&\int_\Omega \nabla w \cdot\nabla (\identity -\Pi_1^\delta) v \,dx=0 \quad&&(w \in X^\delta,\,v \in H^1_{0,\Gamma_D}(\Omega)),\\ \label{f2}
& \int_{\Gamma_D} w (\identity -\Pi_2^\delta) v\,ds=0  && (w \in X^\delta,\,v \in \widetilde{H}^{-\frac12}(\Gamma_D)).
\end{alignat}

Starting with \eqref{f1}, we rewrite it as
$$
0=\sum_{K \in \tria^\delta} \big\{\int_K -\Delta w (\identity -\Pi_1^\delta) v\,dx+\int_{\partial K} \frac{\partial w}{\partial \vec{n}} (\identity -\Pi_1^\delta) v\,ds\big\}
\quad (w \in X^\delta,\,v \in H^1_{0,\Gamma_D}(\Omega)), 
$$
and select
\be \label{g2}
Y_1^\delta:=\cS^0_{p+d-1}(\tria^\delta) \cap H^1_{0,\Gamma_D}(\Omega).
\ee
It suffices to construct  $\Pi_1^\delta \in \cL(H^1_{0,\Gamma_D}(\Omega),Y_1^\delta)$ such that both
\begin{align} \label{100}
\ran(\identity-\Pi_1^\delta)|_e \perp_{L_2(e)} \cP_{p-1}(e) \quad(e \in \cF(\tria^\delta)),
\intertext{and, when $p>1$,} \label{101}
\ran(\identity-\Pi_1^\delta)|_K \perp_{L_2(K)} \cP_{p-2}(K) \quad(K \in \tria^\delta).
\end{align}

Let $\hat{\Pi}_1^\delta\colon H^1_{0,\Gamma_D}(\Omega) \rightarrow S^0_1(\tria^\delta) \cap H^1_{0,\Gamma_D}(\Omega)$ denote the familiar Scott-Zhang interpolator (\cite{247.2}). It satisfies 
$$
\|h_\delta^{-1}(\identity-\hat{\Pi}_1^\delta)v\|_{L_2(K)} + |\hat{\Pi}_1^\delta v|_{H^1(K)} \lesssim |v|_{H^1(\omega_{K,1}(\tria^\delta))} \quad(v \in H^1_{0,\Gamma_D}(\Omega) ).
$$
In two steps we correct $\hat{\Pi}_1^\delta$ to a $\Pi_1^\delta \in \cL(H^1_{0,\Gamma_D}(\Omega),Y_1^\delta)$ that satisfies \eqref{100}-\eqref{101}.

On a facet $\hat{e}$ of a reference $d$-simplex $\hat{K}$, let $b_{\hat{e}}$ denote the $d$-fold product of its barycentric coordinates.
From $\int_{\hat{e}} b_{\hat{e}} |q|^2\,ds \eqsim \int_{\hat{e}} |q|^2\,ds$ ($q \in \cP_{p-1}(\hat{e})$), and $b_{\hat{e}} \cP_{p-1}(\hat{e})=\cP_{p+d-1}(\hat{e}) \cap H^1_0(\hat{e})$, one infers that there exist bases $\{\hat{\tilde{\psi}}_i\}_i$ and $\{\hat{\ell}_i\}_i$ of $\cP_{p+d-1}(\hat{e}) \cap H^1_0(\hat{e})$ and $\cP_{p-1}(\hat{e})$ that are $L_2(\hat{e})$-biorthogonal. Let $\hat{\psi}_i$ be an extension of $\hat{\tilde{\psi}}_i$ to a function in $\cP_{p+d-1}(\hat{K}) \cap H^1_{0,\partial \hat{K}\setminus {\rm int}(\hat{e})}(\hat{K})$.

By using affine bijections between $\hat{K}$ and $K \in \tria^\delta$, for each $e \in \cF(\tria^\delta)$ we lift $\{\hat{\ell}_i\}_i$ to a collection $\{\ell_{e,i}\}_i$ that spans $\cP_{p-1}(e)$, and  lift $\{\hat{\psi}_i\}_i$ to a collection $\{\psi_{e,i}\}_i \subset Y_1^\delta$ of functions supported on the union of the two (or one) simplices in $\tria^\delta$ of which $e$ is a facet.
We set 
$$
\breve{\Pi}_1^\delta v:=\hat{\Pi}_1^\delta v+\sum_{e \in  \cF(\tria^\delta)}\sum_i \frac{\langle v-\hat{\Pi}_1^\delta v,\ell_{e,i}\rangle_{L_2(e)}}{\langle \psi_{e,i},\ell_{e,i}\rangle_{L_2(e)}}\psi_{e,i}
\quad (v \in H^1_{0,\Gamma_D}(\Omega)).
$$
From $\langle \psi_{e,i},\ell_{e,j}\rangle_{L_2(e)}=0$ when $i \neq j$, it follows that
\be \label{12}
\ran (\identity - \breve{\Pi}_1^\delta)|_{e} \perp_{L_2(e)} \cP_{p-1}(e) \quad \quad(e \in \cF(\tria^\delta)).
\ee
Standard homogeneity arguments and the use of the trace inequality show that 
$$
\|h_\delta^{-1}(\identity-\breve{\Pi}_1^\delta)v\|_{L_2(K)} + |\breve{\Pi}_1^\delta v|_{H^1(K)} \lesssim |v|_{H^1(\omega_{K,2}(\tria^\delta))} \quad(v \in H^1_{0,\Gamma_D}(\Omega) ).
$$

For the case that $p=1$, we take $\Pi_1^\delta= \breve{\Pi}_1^\delta$. Otherwise we proceed as follows.
Let $b_{\hat{K}}$ denote the $(d+1)$-fold product of the barycentric coordinates of $\hat{K}$.
From $\int_{\hat{K}} b_{\hat{K}} |q|^2\,dz \eqsim \int_{\hat{K}} |q|^2\,dx$ ($q \in \cP_{p-2}(\hat{K})$), and $b_{\hat{K}} \cP_p(\hat{K})=\cP_{p+d-1}(\hat{K}) \cap H^1_0(\hat{K})$, one infers that there exist bases $\{\hat{\phi}_k\}_k$ and $\{\hat{q}_k\}_k$ of $\cP_{p+d-1}(\hat{K}) \cap H^1_0(\hat{K})$ and $\cP_{p-2}(\hat{K})$ that are $L_2(\hat{K})$-biorthogonal. 

Again using the affine bijections between $\hat{K}$ and $K \in \tria^\delta$, for each $K \in \tria^\delta$ 
we lift $\{\hat{\phi}_k\}_k$ and $\{\hat{q}_k\}_k$ to collections $\{\phi_{K,k}\}_k$ and $\{q_{K,k}\}_k$ that span $\cP_{p+d-1}(K) \cap H^1_0(K)$ and $\cP_{p-2}(K)$, respectively.
We set 
$$
\Pi_1^\delta v:=\breve{\Pi}_1^\delta v+\sum_{K \in  \tria^\delta}\sum_k \frac{\langle v-\breve{\Pi}_1^\delta v,q_{K,k}\rangle_{L_2(K)}}{\langle \phi_{K,k},q_{K,k}\rangle_{L_2(K)}}\phi_{K,k}
\quad (v \in H^1_{0,\Gamma_D}(\Omega)).
$$
Thanks to \eqref{12}, it satisfies \eqref{100}, and from $\langle \phi_{K,k},q_{K,k'}\rangle_{L_2(K)}=0$ when $k \neq k'$, one infers that it satisfies \eqref{101}.
From 
$$
\|h_\delta^{-1}(\identity-\Pi_1^\delta)v\|_{L_2(K)} +  |\Pi_1^\delta v|_{H^1(K)} \lesssim |v|_{H^1(\omega_{K,2}(\tria^\delta))} \quad(v \in H^1_{0,\Gamma_D}(\Omega) ),
$$
we conclude the following result.
\newcounter{footnoteValueSaver}

\begin{proposition} \label{prop1} For $X^\delta$ and $Y_1^\delta$ from \eqref{g1} and \eqref{g2}, it holds that $\Pi_1^\delta \in \cL(H^1_{0,\Gamma_D}(\Omega),Y_1^\delta)$,\footnote{\emph{Uniformly} in all $\tria^\delta$ that satisfy a uniform shape regularity condition.} and \eqref{f1} is valid.
\end{proposition}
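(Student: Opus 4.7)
The plan is to apply Theorem~\ref{thm:Fortin}: it suffices to construct a uniformly bounded $\Pi_1^\delta \in \cL(H^1_{0,\Gamma_D}(\Omega), Y_1^\delta)$ satisfying \eqref{f1}. The starting observation is that for $w \in X^\delta$, elementwise integration by parts gives
\[
\int_\Omega \nabla w \cdot \nabla v\,dx = \sum_{K \in \tria^\delta}\Big(-\!\int_K (\Delta w)\,v\,dx + \int_{\partial K}(\partial_{\vec{n}} w)\,v\,ds\Big),
\]
where $-\Delta w|_K \in \cP_{p-2}(K)$, $(\partial_{\vec{n}} w)|_e \in \cP_{p-1}(e)$, and the $\Gamma_D$ boundary contribution drops because $v \in H^1_{0,\Gamma_D}(\Omega)$. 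Thus \eqref{f1} follows once $\Pi_1^\delta$ enforces the facet condition \eqref{100} and, when $p>1$, the interior condition \eqref{101}.

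To enforce these, I would proceed with the three-stage construction already outlined. Begin with the Scott-Zhang projector $\hat{\Pi}_1^\delta$, a locally $H^1$-stable, $\Gamma_D$-conforming lowest-order starting point. Then add a facet correction built from the biorthogonal bases $\{\psi_{e,i}, \ell_{e,i}\}$; here each $\psi_{e,i}$ is a facet bubble supported on the (at most two) simplices sharing $e$ and vanishing on every other part of their boundaries, so the corrections for different facets are mutually independent and biorthogonality enforces \eqref{100} with explicit coefficients. For $p>1$, finish with an interior correction using the biorthogonal bases $\{\phi_{K,k}, q_{K,k}\}$, where $\phi_{K,k} \in H^1_0(K)$: these interior bubbles leave facet moments unchanged, so \eqref{100} is preserved while \eqref{101} is now enforced. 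By construction $\Pi_1^\delta$ maps into $Y_1^\delta$, since the facet and interior bubble spaces both sit inside $\cS^0_{p+d-1}(\tria^\delta) \cap H^1_{0,\Gamma_D}(\Omega)$; note that when $e \subset \Gamma_D$ the contribution $\langle v-\hat{\Pi}_1^\delta v,\ell_{e,i}\rangle_{L_2(e)}$ automatically vanishes, so the correction respects the boundary condition.

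The remaining task is $H^1$-stability uniform in the mesh. I would obtain it by summing the local estimate
\[
\|h_\delta^{-1}(\identity - \Pi_1^\delta)v\|_{L_2(K)} + |\Pi_1^\delta v|_{H^1(K)} \lesssim |v|_{H^1(\omega_{K,2}(\tria^\delta))}
\]
over $K \in \tria^\delta$ and invoking the uniformly finite overlap of the patches $\omega_{K,2}(\tria^\delta)$ under shape regularity. The local estimate itself follows by affine transfer to the reference simplex: each correction coefficient $\langle v - \hat{\Pi}_1^\delta v,\ell_{e,i}\rangle_{L_2(e)}$, respectively $\langle v - \breve{\Pi}_1^\delta v, q_{K,k}\rangle_{L_2(K)}$, has size $\lesssim h_K^{d/2-1}|v|_{H^1}$ on the appropriate patch by Cauchy--Schwarz, the trace inequality and Scott-Zhang stability, while the paired bubble has $H^1$-seminorm of matching order $h_K^{d/2-1}$. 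The main obstacle is the bookkeeping of scaling constants so that they depend only on the shape-regularity parameter of the family $\{\tria^\delta\}$; this is routine once all building blocks are pulled back to $\hat{K}$ by affine equivalence, but it is the step that must be handled carefully to secure the uniform qualifier in the footnote.
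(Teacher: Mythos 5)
Your construction is essentially the paper's own proof: start from the Scott--Zhang interpolator, add the facet-bubble correction via the $L_2(e)$-biorthogonal pairs $\{\psi_{e,i},\ell_{e,i}\}$ to enforce \eqref{100}, then (for $p>1$) the interior-bubble correction via $\{\phi_{K,k},q_{K,k}\}$ to enforce \eqref{101}, with local $H^1$-stability obtained by scaling to the reference simplex and the trace inequality, and uniformity from shape regularity and finite patch overlap. Your extra remark that the facet coefficients vanish on $e\subset\Gamma_D$ (so the correction respects the essential boundary condition) is a correct detail the paper leaves implicit; otherwise the argument matches the paper's.
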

\setcounter{footnoteValueSaver}{\value{footnote}}

In view of a posteriori error estimation, we consider the data-oscillation term associated to $\Pi_1^\delta$ (actually a slightly modified operator).
We show that it is of higher order than $\inf_{w \in X^\delta}\|u-w\|_{H^1(\Omega)}$ (cf.~Remark~\ref{rem2}) when we take the larger space $Y_1^\delta= \cS^0_{p+d}(\tria^\delta) \cap H^1_{0,\Gamma_D}(\Omega)$.

\begin{remark}[data-oscillation] \label{rem1}
With $\breve{P}_1^\delta:=v \mapsto \sum_{e \in  \cF(\tria^\delta)}\sum_i \frac{\langle v,\ell_{e,i}\rangle_{L_2(e)}}{\langle \psi_{e,i},\ell_{e,i}\rangle_{L_2(e)}}\psi_{e,i}$,
and $P_1^\delta:=v \mapsto  \sum_{K \in  \tria^\delta}\sum_k \frac{\langle v,q_{K,k}\rangle_{L_2(K)}}{\langle \phi_{K,k},q_{K,k}\rangle_{L_2(K)}}\phi_{K,k}$,
it holds that
$\breve{\Pi}_1^\delta=\hat{\Pi}_1^\delta+\breve{P}_1^\delta(\identity-\hat{\Pi}_1^\delta)$, and
$\Pi_1^\delta=\breve{\Pi}_1^\delta+P_1^\delta(\identity-\breve{\Pi}_1^\delta)$,
so that $\identity-\Pi_1^\delta=(\identity-P_1^\delta)(\identity-\breve{P}_1^\delta)(\identity-\hat{\Pi}_1^\delta)$, and so
$$
\identity-{\Pi_1^\delta}'= (\identity-{\hat{\Pi}_1^\delta}')(\identity-{\breve{P}_1^\delta}')(\identity-{P_1^\delta}').
$$
We now replace the Scott-Zhang interpolator $\hat{\Pi}_1^\delta$ by the interpolator onto $S^0_1(\tria^\delta) \cap H^1_{0,\Gamma_D}(\Omega)$ from  \cite{250.1,64.586}\new{,} which does not affect the validity of Proposition~\ref{prop1}.
This new $\hat{\Pi}_1^\delta$ additionally satisfies 
$\|(\identity-{\hat{\Pi}_1^\delta}') f_1\|_{Y_1'} \lesssim \|h_\delta f_1\|_{L_2(\Omega)}$ ($f_1 \in L_2(\Omega)$).
By using this estimate together with the stability and locality of $\breve{P}_1^\delta$ and $P_1^\delta$, and the fact that ${P_1^\delta}'$ reproduces $\cS^{-1}_{p-1}(\tria^\delta)$ (instead of $\cS^{-1}_{p-2}(\tria^\delta)$ for $Y_1^\delta= \cS^0_{p+d-1}(\tria^\delta) \cap H^1_{0,\Gamma_D}(\Omega)$), one infers that
$$
\|(\identity-{\Pi_1^\delta}')f_1\|_{H^1_{0,\Gamma_D}(\Omega)'} \lesssim \sqrt{\sum_{K \in \tria^\delta} (h_\delta|_K)^{2(p+1)}|f_1|_{H^{p-1}(K)}^2} \quad (f_1 \in H^{\new{p}}(\Omega)).\qedhere
$$
\end{remark}

To construct the Fortin interpolator $\Pi_2^\delta$, with $\cF^\delta_{\Gamma_D}:=\{e \in \cF(\tria^\delta)\colon e \subset \Gamma_D\}$ we take
\be \label{g3}
Y^\delta_2:=\cS^{-1}_{p}(\cF^\delta_{\Gamma_D}).
\ee
With $\{\phi^\delta_i\}$ being the nodal basis of $\cS_p^0(\cF^\delta_{\Gamma_D}) \supset \ran \Gamma_D|_{X^\delta}$,
it is known that a projector $P_2^\delta$ of Scott-Zhang type exists of the form $P_2^\delta v =\sum_i \langle v,\psi^\delta_i\rangle_{L_2(\Gamma_D)} \phi^\delta_i$,
where $\{\psi^\delta_i\} \subset Y^\delta_2$ is biorthogonal to $\{\phi^\delta_i\}$,
$P_2^\delta$ is bounded in $L_2(\Gamma_D)$ and in $H^1(\Gamma_D)$, and 
\be \label{103}
\|(\identity-P_2^\delta)f_2\|_{H^{\frac12}(\Gamma_D)} \lesssim \sqrt{\sum_{e \in \cF^\delta_{\Gamma_D}} (h_\delta|_e)^{2p+1}|f_2|_{H^{p+1}(e)}^2} \quad (f_2 \in H^{p+1}(\Omega)).
\ee
Since $\Pi_2^\delta:={P_2^\delta}'$ maps into $Y_2^\delta$, and $P_2^\delta$ reproduces $\cS_p^0(\cF^\delta_{\Gamma_D})$, we conclude the following result.

\begin{proposition} \label{prop2} For $X^\delta$ and $Y_2^\delta$ from \eqref{g1} and \eqref{g3}, it holds that $\Pi_2^\delta \in \cL(\widetilde{H}^{-\frac12}(\Gamma_D),Y_2^\delta)$,\footnotemark[\value{footnoteValueSaver}] and \eqref{f2} is valid.
\end{proposition}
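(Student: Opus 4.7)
The plan is to verify two properties: first, that $\Pi_2^\delta := {P_2^\delta}'$ is uniformly bounded as a map $\widetilde{H}^{-\frac12}(\Gamma_D) \to Y_2^\delta$; and second, that the Fortin orthogonality \eqref{f2} holds. Both are direct consequences, by duality, of the primal Scott--Zhang type projector $P_2^\delta$ already introduced, the key structural fact being that $P_2^\delta$ reproduces $\cS^0_p(\cF^\delta_{\Gamma_D}) \supset \gamma_D X^\delta$.

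For the boundedness, I would invoke that $P_2^\delta$ is uniformly bounded on $L_2(\Gamma_D)$ and on $H^1(\Gamma_D)$. Since $H^{\frac12}(\Gamma_D) = [L_2(\Gamma_D), H^1(\Gamma_D)]_{\frac12, 2}$, real interpolation then yields uniform boundedness of $P_2^\delta$ on $H^{\frac12}(\Gamma_D)$. The explicit formula $\Pi_2^\delta v = \sum_i \langle v, \phi_i^\delta\rangle_{L_2(\Gamma_D)} \psi_i^\delta$ defines a bounded operator on $L_2(\Gamma_D)$ whose action extends by density to the duality pairing, and which agrees with the adjoint of $P_2^\delta$; hence $\Pi_2^\delta$ is uniformly bounded on $H^{\frac12}(\Gamma_D)' = \widetilde{H}^{-\frac12}(\Gamma_D)$, and manifestly $\ran \Pi_2^\delta \subset \Span\{\psi_i^\delta\} \subset Y_2^\delta$.

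To establish \eqref{f2}, pick $w \in X^\delta$ and $v \in \widetilde{H}^{-\frac12}(\Gamma_D)$. Then $\gamma_D w \in \cS^0_p(\cF^\delta_{\Gamma_D})$, which is pointwise reproduced by $P_2^\delta$, so $P_2^\delta \gamma_D w = \gamma_D w$. Interpreting the boundary integral as the duality pairing between $\widetilde{H}^{-\frac12}(\Gamma_D)$ and $H^{\frac12}(\Gamma_D)$, and using $\Pi_2^\delta = {P_2^\delta}'$, I would compute
\begin{equation*}
\int_{\Gamma_D} w\,(\identity - \Pi_2^\delta) v \, ds = \langle \gamma_D w, v \rangle - \langle P_2^\delta \gamma_D w, v \rangle = 0.
\end{equation*}
The only point that calls for some care — and therefore the main (mild) obstacle — is to justify the interpolation identification $[L_2(\Gamma_D), H^1(\Gamma_D)]_{\frac12, 2} = H^{\frac12}(\Gamma_D)$ on the piecewise flat Lipschitz manifold $\Gamma_D$, which is a standard result. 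Uniformity over shape regular $\tria^\delta$ is inherited directly from the corresponding uniform bounds on $P_2^\delta$.
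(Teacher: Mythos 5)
Your argument is correct and is essentially the paper's own: the paper likewise defines $\Pi_2^\delta:={P_2^\delta}'$, obtains its uniform $\widetilde{H}^{-\frac12}(\Gamma_D)$-boundedness by duality from the $L_2(\Gamma_D)$- and $H^1(\Gamma_D)$-stability of $P_2^\delta$ (hence $H^{\frac12}(\Gamma_D)$-stability by interpolation), and deduces \eqref{f2} from $\ran\Pi_2^\delta\subset\Span\{\psi_i^\delta\}\subset Y_2^\delta$ together with the fact that $P_2^\delta$ reproduces $\cS_p^0(\cF^\delta_{\Gamma_D})\supset \ran\gamma_D|_{X^\delta}$. Your extra care about the interpolation identification and the duality-pairing interpretation of the boundary integral only makes explicit what the paper leaves implicit.
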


\begin{remark}[data-oscillation]
Equation \eqref{103} shows that the data-oscillation term corresponding to $\Pi_2^\delta$ is of higher order than the best approximation error.
\end{remark}

\subsection{Inf-sup conditions for Example~\ref{ex:mild}\ref{mild2} (mild formulation)}
We take
\be \label{g4}
X^\delta:=\RT_{p-1}(\tria^\delta) \times \cS^0_p(\tria^\delta),
\ee
where $\RT_{p-1}(\tria^\delta)=\RT^{-1}_{p-1}(\tria^\delta) \cap H(\divv;\Omega)$ and $\RT^{-1}_{p-1}(\tria^\delta)=\{\vec{q}\in L_2(\Omega)^d\colon \vec{q}|_K \in \cP_{p-1}(K)^d+\vec{x}\cP_{p-1}(K)\}$.
The term $\|\gamma_D w -h_D\|_{H^{\frac12}(\Gamma_D)}^2=\|\gamma_D w -h_D\|_{\widetilde{H}^{-\frac12}(\Gamma_D)'}^2$
can be handled as in Example~\ref{ex:2ndorder}. The dual norm can be discretized by replacing $\widetilde{H}^{-\frac12}(\Gamma_D)$ by $\cS^{-1}_{p}(\cF^\delta_{\Gamma_D})$.

Considering the term $\|\gamma_N \vec{q}-h_N\|_{H^{-\frac12}(\Gamma_N)}^2$,
using that $\ran \gamma_N|_{\RT_{p-1}(\tria^\delta)} = \cS^{-1}_p(\cF_{\Gamma_N}^\delta)$, one needs to select a finite dimensional subspace $Y_1^\delta \subset  Y_1=H_{00}^{\frac12}(\Gamma_N)$ that  allows for the construction of a Fortin interpolator
$\Pi_1^\delta \in \cL(H_{00}^{\frac12}(\Gamma_N),Y_1^\delta)$ with
\be \label{102}
 \int_{\Gamma_N} w (\identity -\Pi_1^\delta) v\,ds=0  \quad (w \in \cS^{-1}_{p-1}(\cF_{\Gamma_N}^\delta) ,\,v \in H_{00}^{\frac12}(\Gamma_N)).
\ee

We take
\be \label{g5}
Y_1^\delta:=\cS^0_{p+d-1}(\cF^\delta_{\Gamma_N}) \cap H^1_0(\Gamma_N),
\ee
and follow a somewhat simplified version of the construction of $\Pi_1^\delta$ in Sect.~\ref{sec:4.1}.
Let $\hat{\Pi}_1^\delta$ be a modified Scott-Zhang projector onto $\cS_1^0(\cF_{\Gamma_N}^\delta)\cap H^1_0(\Gamma_N)$ from \cite{64.586}.
For $e \in \cF_{\Gamma_N}^\delta$, we can find $\{\phi_{e,k}\}$ and $\{q_{e,k}\}$, which up to a scaling are $L_2(e)$-biorthogal, and that span $\cP_{d+p-1}(e) \cap H_0^1(e)$ and $\cP_{p-1}(e)$, respectively, such that for $\Pi_1^\delta$ defined by
$$
\Pi_1^\delta v:=\hat{\Pi}_1^\delta v+\sum_{e \in  \cF_{\Gamma_N}^\delta}\sum_k \frac{\langle v-\hat{\Pi}_1^\delta v,q_{e,k}\rangle_{L_2(e)}}{\langle \phi_{e,k},q_{e,k}\rangle_{L_2(e)}}\phi_{e,k},
$$
the following result is valid.

\begin{proposition} \label{prop3} For $X^\delta$ and $Y_1^\delta$ from \eqref{g4} and \eqref{g5}, it holds that $\Pi_1^\delta \in \cL(H_{00}^{\frac12}(\Gamma_N),Y_1^\delta)$,\footnotemark[\value{footnoteValueSaver}] and \eqref{102} is valid.
\end{proposition}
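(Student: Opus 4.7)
The plan is to follow, on the surface $\Gamma_N$, the two-step correction strategy used in Sect.~\ref{sec:4.1}, but with a simplification: since the test space needed in \eqref{102} is $\cS^{-1}_{p-1}(\cF_{\Gamma_N}^\delta)$ (piecewise polynomials on facets, no continuity), a single facet-wise correction suffices in place of the nested facet/element correction used for the volumetric operator. Thus I only need to mimic the first step of the construction in Prop.~\ref{prop1}, replacing $d$-simplices by their $(d-1)$-dimensional facets $e \in \cF_{\Gamma_N}^\delta$ and the Scott--Zhang interpolator by the boundary-preserving variant from \cite{64.586}.

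First I would verify the mapping property $\Pi_1^\delta \in \cL(H_{00}^{\frac12}(\Gamma_N), Y_1^\delta)$ at the level of ranges. The term $\hat{\Pi}_1^\delta v$ lies in $\cS^0_1(\cF_{\Gamma_N}^\delta) \cap H^1_0(\Gamma_N) \subset Y_1^\delta$, while each correction function $\phi_{e,k}$ is, by construction, supported in $e$ and vanishes on $\partial e$, so its zero extension lies in $\cS^0_{p+d-1}(\cF_{\Gamma_N}^\delta) \cap H^1_0(\Gamma_N)$ as required. Next I would establish \eqref{102}: given $w \in \cS^{-1}_{p-1}(\cF_{\Gamma_N}^\delta)$, the restriction $w|_e$ lies in $\cP_{p-1}(e) = \Span\{q_{e,k}\}$, so expanding $w|_e$ in the $q_{e,k}$ and using the $L_2(e)$-biorthogonality $\langle \phi_{e,k}, q_{e,k'}\rangle_{L_2(e)}=0$ for $k \neq k'$ shows facet-by-facet that $\int_e w(v-\Pi_1^\delta v)\,ds = 0$, whence the global orthogonality holds by summing.

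The main obstacle is the boundedness of $\Pi_1^\delta$ in the fractional norm $\|\cdot\|_{H_{00}^{\frac12}(\Gamma_N)}$. My plan is to establish boundedness on the two endpoint spaces $L_2(\Gamma_N)$ and $H^1_0(\Gamma_N)$ and then interpolate, using that $H_{00}^{\frac12}(\Gamma_N) = [L_2(\Gamma_N), H^1_0(\Gamma_N)]_{\frac12,2}$ by definition. The modified Scott--Zhang projector $\hat{\Pi}_1^\delta$ from \cite{64.586} is by construction bounded on both endpoint spaces (it preserves the homogeneous Dirichlet condition on $\partial \Gamma_N$). For the correction operator $v \mapsto \sum_{e,k} \frac{\langle v-\hat{\Pi}_1^\delta v, q_{e,k}\rangle_{L_2(e)}}{\langle \phi_{e,k},q_{e,k}\rangle_{L_2(e)}}\phi_{e,k}$, a standard homogeneity argument on a reference facet gives $\|\phi_{e,k}\|_{L_2(e)} \eqsim |e|^{1/2}$, $|\phi_{e,k}|_{H^1(e)} \eqsim |e|^{1/2}/h_\delta|_e$, and $|\langle \phi_{e,k},q_{e,k}\rangle_{L_2(e)}|\eqsim |e| \|q_{e,k}\|_{L_2(e)}$, while the biorthogonal dual basis satisfies $\|q_{e,k}\|_{L_2(e)} \eqsim |e|^{-1/2}$. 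Combining these with Cauchy--Schwarz, the local stability $\|(\identity-\hat{\Pi}_1^\delta)v\|_{L_2(e)} \lesssim \|v\|_{L_2(\omega_e)}$ and $\|(\identity-\hat{\Pi}_1^\delta)v\|_{L_2(e)} \lesssim (h_\delta|_e)|v|_{H^1(\omega_e)}$ (obtained by a trace inequality followed by the endpoint stability of $\hat{\Pi}_1^\delta$), and the finite overlap of the local patches, I get $\|\Pi_1^\delta v\|_{L_2(\Gamma_N)}\lesssim \|v\|_{L_2(\Gamma_N)}$ and $|\Pi_1^\delta v|_{H^1(\Gamma_N)}\lesssim |v|_{H^1(\Gamma_N)}$ uniformly in shape-regular $\tria^\delta$. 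Interpolating between the endpoint estimates then yields $\|\Pi_1^\delta v\|_{H_{00}^{\frac12}(\Gamma_N)} \lesssim \|v\|_{H_{00}^{\frac12}(\Gamma_N)}$, completing the proof.
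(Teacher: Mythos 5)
Your proposal is correct and follows essentially the same route as the paper: the identical single facet-bubble correction of the modified Scott--Zhang interpolator from \cite{64.586}, with \eqref{102} obtained facet-by-facet from the $L_2(e)$-biorthogonality of $\{\phi_{e,k}\}$ and $\{q_{e,k}\}$, and the $H^{\frac12}_{00}(\Gamma_N)$-boundedness deduced from simultaneous $L_2(\Gamma_N)$- and $H^1_0(\Gamma_N)$-stability by interpolation, which is precisely what the definition $H_{00}^{\frac12}(\Gamma_N)=[L_2(\Gamma_N),H^1_0(\Gamma_N)]_{\frac12,2}$ used in the paper is set up for. Two cosmetic points only: your scaling bookkeeping ($\|q_{e,k}\|_{L_2(e)}\eqsim |e|^{-\frac12}$ versus $|\langle \phi_{e,k},q_{e,k}\rangle_{L_2(e)}|\eqsim |e|\,\|q_{e,k}\|_{L_2(e)}$) is internally inconsistent by a factor $|e|^{\frac12}$, but only the normalization-invariant ratio $\|\phi_{e,k}\|_{L_2(e)}\|q_{e,k}\|_{L_2(e)}/|\langle \phi_{e,k},q_{e,k}\rangle_{L_2(e)}|\eqsim 1$ enters the estimate, and no trace inequality is needed since, unlike in Sect.~\ref{sec:4.1}, all quantities already live on the surface mesh $\cF^\delta_{\Gamma_N}$.
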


\begin{remark}[data-oscillation]
It holds that
$$
\|(\identity-{\Pi_1^\delta}')f_1\|_{H^{-\frac12}(\Gamma_N)} \lesssim \sqrt{\sum_{e \in \cF_{\Gamma_N}^\delta} (h_\delta|_K)^{2p+1}|f_1|_{H^{p}(K)}^2} \quad (f_1 \in H^{p}(\Omega)),
$$
so the data-oscillation term corresponding to $\Pi_1^\delta$ is of higher order than the best approximation error.
\end{remark}

\begin{remark}[Avoidance of the condition $g \in L_2(\Omega)$] \label{rem:singulardata} 
Consider the mild formulation with homogeneous boundary data $h_D=0$ and $h_N=0$ (i.e., Example~\ref{ex:mild}\ref{mild1}),
so that $G(\vec{q},w)=(\vec{q}-A u,B w-\divv \vec{q})$.
As  noticed before, a disadvantage of this formulation is that it requires a forcing term $g \in L_2(\Omega)$.
As shown in \cite{75.259, 75.068}, assuming $B=0$ this condition can be circumvented by replacing a general $g \in H^1_{0,\Gamma_D}(\Omega)'$ by a finite element approximation, resulting in a MINRES method that is quasi-optimal in the weaker $L_2(\Omega)^d \times H^1(\Omega)$-norm.
The analysis in \cite{75.068} was restricted to the lowest order case, and below we generalise it to finite element approximation of general degree.

For
$$
X^\delta:=\big(\RT_{p-1}(\tria^\delta) \cap H_{0,\Gamma_N}(\divv;\Omega)\big) \times \big(\cS^0_p(\tria^\delta) \cap H^1_{0,\Gamma_D}(\Omega)\big),
$$
and $\tilde{Q}_{p-1}^\delta$ being the $H^1_{0,\Gamma_D}(\Omega)'$-bounded, efficiently applicable projector onto $\cS_{p-1}^{-1}(\tria^\delta)$ defined as the adjoint of the projector ``$P_\tria$'' from
 \cite[Thm.~5.1]{249.97}, or, alternatively for $p=1$, the projector ``$Q_h$'' from \cite[Prop.~8]{75.259}, let 
 \be \label{mildmodifiedrhs}
(\vec{p}^\delta,u^\delta):=\argmin_{(\vec{q},w) \in X^\delta} \tfrac12 \|G(\vec{q},w)-(0,\tilde{Q}_{p-1}^\delta g) \|^2_{L_2(\Omega)^d \times L_2(\Omega)}.
\ee

Let $P^\delta_{p-1} \in \cL\big(H_{0,\Gamma_N}(\divv;\Omega), H_{0,\Gamma_N}(\divv;\Omega)\big)$ be the projector onto $\RT_{p-1}(\tria^\delta) \cap H_{0,\Gamma_N}(\divv;\Omega)$ constructed in \cite{70.991}. 
It has a commuting diagram property (being the essence behind this approach), and consequently for $\vec{q} \in H_{0,\Gamma_N}(\divv;\Omega)$ with $\divv \vec{q} \in \cS_{p-1}^{-1}(\tria^\delta)$, it satisfies
$$
\|\vec{q} -P^\delta_{p-1} \vec{q}\|_{H(\divv;\Omega)} \lesssim \inf_{\vec{z} \in \RT^{-1}_{p-1}(\tria^\delta)}\|\vec{q} - \vec{z}\|_{L_2(\Omega)}.
$$

Let $(\vec{p},u)$ denote the solution of the mild-weak system $\vec{p}-A\nabla u=0$,\linebreak \mbox{$\int_\Omega \vec{p}\cdot\nabla v \,dx=g(v)$} ($v \in H^1_{0,\Gamma_D}(\Omega)$),
and let $(\undertilde{\vec{p}},\undertilde{u})$ denotes this solution with $g$ replaced by $\tilde{Q}_{p-1}^\delta g$.
Notice that $G(\undertilde{\vec{p}},\undertilde{u})=(0,\tilde{Q}_{p-1}^\delta g)$ and so $\divv \undertilde{\vec{p}} \in \cS_{p-1}^{-1}(\tria^\delta)$.
From $g \mapsto (\vec{p},u) \in \cL\big(H^1_{0,\Gamma_D}(\Omega)', L_2(\Omega)^d \times H^1_{0,\Gamma_D}(\Omega)\big)$, and the quasi-optimality of the MINRES discretization \eqref{mildmodifiedrhs}  in $H(\divv;\Omega) \times H^1(\Omega)$-norm, we infer that
\begin{align*}
\|\vec{p}&-\vec{p}^\delta\|_{L_2(\Omega)^d}  + \|u-u^\delta\|_{H^1(\Omega)}\\ 
& \lesssim  \|g-\tilde{Q}_{p-1}^\delta g\|_{H^1_{0,\Gamma_D}(\Omega)'}+\|\undertilde{\vec{p}}-\vec{p}^\delta\|_{H(\divv;\Omega)}+\|\undertilde{u}-u^\delta\|_{H^1(\Omega)}\\ 
&\lesssim  \|g-\tilde{Q}_{p-1}^\delta g\|_{H^1_{0,\Gamma_D}(\Omega)'}+ \inf_{(\vec{z},w) \in X^\delta} \|\undertilde{\vec{p}}-\vec{z}\|_{H(\divv;\Omega)}+\|\undertilde{u}-w\|_{H^1(\Omega)} \\ 
&\leq \|g-\tilde{Q}_{p-1}^\delta g\|_{H^1_{0,\Gamma_D}(\Omega)'}+\|\undertilde{\vec{p}}-P^\delta_{p-1} \undertilde{\vec{p}} \|_{H(\divv;\Omega)}+ \hspace*{-2em}\inf_{\rule{0mm}{3.5mm}w \in \cS^0_p(\tria^\delta) \cap H^1_{0,\Gamma_D}(\Omega)}\hspace*{-2em} \|\undertilde{u}-w\|_{H^1(\Omega)} \\ 
& \lesssim  \|g-\tilde{Q}_{p-1}^\delta g\|_{H^1_{0,\Gamma_D}(\Omega)'}+ \inf_{(\vec{z},w) \in X^\delta} \|\undertilde{\vec{p}}-\vec{z}\|_{L_2(\Omega)^d}+ \|\undertilde{u}-w\|_{H^1(\Omega)}\\ 
& \lesssim   \inf_{z \in \cS_{p-1}^{-1}(\tria^\delta)} \|g-z\|_{H^1_{0,\Gamma_D}(\Omega)'}+ \inf_{(\vec{z},w) \in X^\delta} \|\vec{p}-\vec{z}\|_{L_2(\Omega)^d}+ \|u-w\|_{H^1(\Omega)}\\  
& \lesssim   \inf_{\vec{z} \in \RT_{p-1}(\tria^\delta) \cap H_{0,\Gamma_N}(\divv;\Omega)} \|g+\divv \vec{z}\|_{H^1_{0,\Gamma_D}(\Omega)'}\\
&\hspace*{4cm}+ \inf_{(\vec{z},w) \in X^\delta} \|\vec{p}-\vec{z}\|_{L_2(\Omega)^d}+ \|u-w\|_{H^1(\Omega)}\\
& \lesssim \inf_{(\vec{z},w) \in X^\delta} \|\vec{p}-\vec{z}\|_{L_2(\Omega)^d}+ \|u-w\|_{H^1(\Omega)},
\end{align*}
where for the last inequality we have used that for $\vec{z} \in \RT_{p-1}(\tria^\delta) \cap H_{0,\Gamma_N}(\divv;\Omega)$ and $v \in H^1_{0,\Gamma_D}(\Omega)$,
$|g(v)+\int_\Omega \divv \vec{z}\,v\,dx|=|\int_\Omega (\vec{p}- \vec{z}) \cdot \nabla v\,dx|$.
We conclude quasi-optimality of $(\vec{p}^\delta,u^\delta) \in X^\delta$ w.r.t.~the $L_2(\Omega)^d \times H^1(\Omega)^d$-norm.
\end{remark}

\subsection{Inf-sup conditions for Example~\ref{ex:mild-weak}\ref{ex:mild-weak2} (mild-weak formulation)}
We take
$$
X^\delta:=\cS^{-1}_{p-1}(\tria^\delta)^d \times \cS^0_p(\tria^\delta).
$$
For simplicity we assume that $A=\identity$ and $B=0$, so that $G_2(\vec{q},w)=G_2(\vec{q})$.

Again the term $\|\gamma_D w -h_D\|_{H^{\frac12}(\Gamma_D)}^2=\|\gamma_D w -h_D\|_{\widetilde{H}^{-\frac12}(\Gamma_D)'}^2$
can be handled as in Example~\ref{ex:2ndorder}. The dual norm can be discretized by replacing $\widetilde{H}^{-\frac12}(\Gamma_D)$ by $\cS^{-1}_{p}(\cF^\delta_{\Gamma_D})$.

From $\int_\Omega \vec{q}\cdot \nabla v\,dx=\sum_{K \in \tria^\delta}\{\int_K -\divv \vec{q}\,v \,dx+\int_{\partial K}\vec{q}\cdot\vec{n}\, v\,ds\}$
where, when $p\geq 2$, for $K \in \tria^\delta$, $\divv \vec{q} \in \cP_{p-2}(K)$, and for $e \in \cF(\tria^\delta)$, $\vec{q}\cdot\vec{n} \in \cP_{p-1}(e)$,
we conclude that the term $\|G_2(\vec{q})-f_2\|_{H^1_{0,\Gamma_D}(\Omega)'}$ can be handled as in Example~\ref{ex:2ndorder}. The dual norm can be discretized by replacing $H^1_{0,\Gamma_D}(\Omega)$ by $\cS^0_{p+d-1}(\tria^\delta) \cap H^1_{0,\Gamma_D}(\Omega)$.

\begin{remark}[Approach from \cite{35.83}] Consider the mild-weak formulation with homogeneous essential boundary data $h_D=0$ (i.e., Example~\ref{ex:mild-weak}\ref{ex:mild-weak1}), 
as well as $h_N=0$, and, for simplicity,
$A=\identity$ and $B=0$.
Our approach was to determine $Y^\delta \subset H^1_{0,\Gamma_D}(\Omega)$ that allows for the construction of $\Pi^\delta \in \cL(H^1_{0,\Gamma_D}(\Omega),Y^\delta)$ with
$\int_\Omega \vec{q}\cdot \nabla(\identity-\Pi^\delta) v \,dx=0$ ($\vec{q} \in \cS_{p-1}^{-1}(\tria^\delta)^d,\,v \in H^1_{0,\Gamma_D}(\Omega)$).
Consequently, we could replace the term $\|v \mapsto \int_{\Omega}\vec{q}\cdot \nabla v \,dx-g(v)\|^2_{H^1_{0,\Gamma_D}(\Omega)'}$ in the least-squares minimization by 
the computable term $\|v \mapsto \int_{\Omega}\vec{q}\cdot \nabla v \,dx-g(v)\|^2_{{Y^\delta}'}$ without compromizing quasi-optimality of the resulting least-squares solution $(\vec{p}^\delta,u^\delta) \in X^\delta$.

Under the additional conditions that  $g \in L_2(\Omega)$, and that the finite element space $X^\delta$ w.r.t.~$\tria^\delta$ is contained in $H^1_{0,\Gamma_D}(\Omega) \times H_{0,\Gamma_N}(\divv;\Omega)$, for a finite element space $Y^\delta$ w.r.t.~$\tria^\delta$ for which there exists a mapping $\Pi^\delta \in \cL(H^1_{0,\Gamma_D}(\Omega),Y^\delta)$ with $\|h_\delta(\identity-\Pi^\delta)\|_{\cL(H^1_{0,\Gamma_D}(\Omega),L_2(\Omega))}\lesssim 1$, the approach from \cite{35.83} is to compute
$$
\argmin_{(\vec{q},w) \in X^\delta} \tfrac12\big(\|\vec{q}-\nabla w\|_{L_2(\Omega)^d}^2+\|v \mapsto \int_{\Omega}\vec{q}\cdot \nabla v - g v\,dx\|_{{Y^\delta}'}^2+
\|h_\delta (\divv \vec{q}+g)\|_{L_2(\Omega)}^2\big).
$$
So compared to our least-squares functional there is the additional term $\|h_\delta (\divv \vec{q}+g)\|_{L_2(\Omega)}^2$, whereas on the other hand the selection of $Y^\delta$ is less demanding.
Following \cite{35.83}, it can be shown that the resulting least squares solution denoted by $(\vec{p}^\delta,u^\delta)$ satisfies
\begin{align*}
\|\vec{p}-\vec{p}^\delta\|_{L_2(\Omega)^d}+&\|u-u^\delta\|_{H^1(\Omega)} \\
& \lesssim 
\inf_{(\vec{q},w) \in X^\delta} \|\vec{p}-\vec{q}\|_{L_2(\Omega)^d}+\|u-w\|_{H^1(\Omega)} +\|h_\delta \divv(\vec{p}-\vec{q})\|_{L_2(\Omega)}.
\end{align*}
This estimate does not imply quasi-optimality, but under usual regularity conditions w.r.t.~Hilbertian Sobolev spaces optimal rates can be demonstrated.
The assumption $g \in L_2(\Omega)$ can be weakened by replacing $g$ by an approximation from a finite element space w.r.t.~$\tria^\delta$.
\end{remark}

\subsection{Inf-sup condition for Example~\ref{ex:ultra-weak} (ultra-weak formulation)}
We restrict our analysis to the case that $|\Gamma_D|>0$, $A=\identity$, and $B=0$.
Then for $(\vec{q},w) \in X =L_2(\Omega)^d \times L_2(\Omega)$, and $(\vec{z},v) \in Y=H_{0,\Gamma_N}(\divv;\Omega) \times H^1_{0,\Gamma_D}(\Omega)$,
\be \label{eq:a}
(G(\vec{q},w))(\vec{z},v)=\int_\Omega \vec{q}\cdot\vec{z}+w \divv \vec{z}+\vec{q}\cdot \nabla v\,dx.
\ee
So far, for the lowest order case of
$$
X^\delta=\cS^{-1}_0(\tria^\delta)^d \times \cS^{-1}_0(\tria^\delta),
$$
we are able to construct a suitable Fortin interpolator taking
\be \label{eq:c}
Y^\delta=\big(\RT_{0}(\tria^\delta) \times \cS^0_d(\tria^\delta)\big) \cap Y.
\ee

We will utilise the Crouzeix-Raviart finite element space 
$$
\CR_{\Gamma_D}(\tria^\delta):=\{w \in \cS^{-1}_1(\tria^\delta)\colon \int_e[v]_e \,ds=0 \,\,(e \in \cF(\tria^\delta),e \not\subset \Gamma_N)\},
$$
where $[v]_e$ denotes the jump of $v$ over $e$ (with $v$ extended with zero outside $\Omega$).
With the abbreviation
$$
\RT_{\Gamma_N}(\divv0;\tria^\delta):=\RT_0(\tria^\delta) \cap H_{0,\Gamma_N}(\divv0;\tria^\delta),
$$
and with $\nabla_{\tria^\delta}$ denoting the piecewise gradient, we have the following generalisation of \cite[Thm.~4.1]{14.23} that was restricted to $d=2$ .

\begin{lemma}[discrete Helmholtz decomposition] \label{lem:helm}It holds that
$$
\cS_0^{-1}(\tria^\delta)^d=\RT_{\Gamma_N}(\divv0;\tria^\delta)  \oplus^{\perp_{L_2(\Omega)^d}} \nabla_{\tria^\delta} \CR_{\Gamma_D}(\tria^\delta). 
$$
\end{lemma}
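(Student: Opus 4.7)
The plan is to establish the decomposition in three steps: inclusions, $L_2$-orthogonality, and a matching dimension count.

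First, I would verify that both summands are contained in $\cS_0^{-1}(\tria^\delta)^d$. A function in $\RT_0(\tria^\delta)$ has the form $\vec{a}_K + b_K\vec{x}$ on each $K$, with element-wise divergence $d\,b_K$; imposing $\divv \vec{z}=0$ forces $b_K=0$, so $\vec{z}$ is piecewise constant. The piecewise gradient of a (piecewise-affine) Crouzeix--Raviart function is automatically piecewise constant. Second, for $\vec{z} \in \RT_{\Gamma_N}(\divv 0;\tria^\delta)$ and $v \in \CR_{\Gamma_D}(\tria^\delta)$, elementwise integration by parts combined with piecewise $\divv\vec{z}=0$ yields
$$
\int_\Omega \vec{z}\cdot\nabla_{\tria^\delta} v\,dx \;=\; \sum_{e \in \cF(\tria^\delta)} (\vec{z}\cdot\vec{n}_e)|_e \int_e [v]_e\,ds,
$$
where the key observation is that $\vec{z}\cdot\vec{n}_e$ is constant along each facet (an $\RT_0$ property). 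For $e \not\subset \Gamma_N$ the integral $\int_e [v]_e\,ds$ vanishes by the definition of $\CR_{\Gamma_D}$, and for $e\subset\Gamma_N$ the factor $\vec{z}\cdot\vec{n}_e$ vanishes. This proves $L_2$-orthogonality.

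Third, a dimension count closes the argument. Write $N_K=\#\tria^\delta$, $N_I=\#\cF_{\mathrm{int}}(\tria^\delta)$, $N_D=\#\cF^\delta_{\Gamma_D}$, $N_N=\#\cF^\delta_{\Gamma_N}$, so $\dim \cS_0^{-1}(\tria^\delta)^d = d N_K$, and the facet-counting identity $(d+1)N_K = 2N_I + N_D + N_N$ holds. Standard CR bookkeeping gives $\dim \CR_{\Gamma_D}(\tria^\delta) = N_I + N_N$, and the kernel of $\nabla_{\tria^\delta}$ on $\CR_{\Gamma_D}$ is trivial: a piecewise-constant element must be continuous across interior facets and must vanish on every $K$ that abuts $\Gamma_D$, and connectivity of $\Omega$ combined with $|\Gamma_D|>0$ forces it to be zero. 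Hence $\dim \nabla_{\tria^\delta}\CR_{\Gamma_D}(\tria^\delta) = N_I + N_N$. On the RT side, $\dim(\RT_0\cap H_{0,\Gamma_N}(\divv;\Omega)) = N_I + N_D$, and the discrete divergence operator into $\cS_0^{-1}(\tria^\delta)$ is surjective, which together yields $\dim \RT_{\Gamma_N}(\divv 0;\tria^\delta) = N_I + N_D - N_K$. Summing, $(N_I + N_D - N_K) + (N_I + N_N) = (d+1)N_K - N_K = d N_K$, matching the ambient dimension; combined with the orthogonality of step two this forces the claimed direct-sum decomposition.

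The main obstacle is the surjectivity of $\divv\colon \RT_0(\tria^\delta)\cap H_{0,\Gamma_N}(\divv;\Omega)\to \cS_0^{-1}(\tria^\delta)$ under mixed boundary conditions. I would justify it by the standard route: given $\phi \in \cS_0^{-1}(\tria^\delta)$, solve the continuous mixed Poisson problem with Dirichlet data on $\Gamma_D$ (well-posed thanks to $|\Gamma_D|>0$) to obtain $\vec{w} \in H_{0,\Gamma_N}(\divv;\Omega)$ with $\divv \vec{w}=\phi$, and apply the commuting-diagram RT-interpolant to produce a preimage in the discrete space. An equivalent alternative is to verify directly that the $L_2$-orthogonal complement of the sum inside $\cS_0^{-1}(\tria^\delta)^d$ is trivial by testing against suitable facet-supported functions, but the dimension-count route is more economical.
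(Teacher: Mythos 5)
Your proof is correct and follows essentially the same route as the paper: inclusion of both summands in $\cS_0^{-1}(\tria^\delta)^d$, $L_2$-orthogonality via elementwise integration by parts using the facet-wise constant normal traces and the jump conditions, and a dimension count with the same facet bookkeeping. The only (minor) difference is that the paper gets by with the rank--nullity inequality $\dim \RT_{\Gamma_N}(\divv0;\tria^\delta)\geq \#\cF(\tria^\delta)-\#\{e\in\cF(\tria^\delta)\colon e\subset\Gamma_N\}-\#\tria^\delta$, so it never needs the surjectivity of the discrete divergence that you establish via the continuous mixed problem and a commuting interpolant.
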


\begin{proof} For $(\vec{q},w) \in \RT_{\Gamma_N}(\divv0;\tria^\delta) \times \CR_{\Gamma_D}(\tria^\delta)$, a piecewise integration-by-parts shows that
$$
\int_\Omega \vec{q} \cdot \nabla_{\tria^\delta} w\,dx=\sum_{e \in \cF(\tria^\delta)} \int_e [w]_e \,\vec{q}\cdot\vec{n}\,ds=0.
$$
It is known that, besides $\nabla_{\tria^\delta} \CR_{\Gamma_D}(\tria^\delta)$, also $\RT_{\Gamma_N}(\divv0;\tria^\delta)$ is in $ \cS_0^{-1}(\tria^\delta)^d$.

From $\divv\colon \RT_0(\tria^\delta) \cap H_{0,\Gamma_N}(\divv;\tria^\delta) \rightarrow \cS_0^{-1}(\tria^\delta)$, and $\dim \cS_0^{-1}(\tria^\delta)=\# \tria^\delta$, one infers
$$
\dim \RT_{\Gamma_N}(\divv0;\tria^\delta)\geq\# \cF(\tria^\delta)-\# \{e \in \cF(\tria^\delta)\colon e \subset \Gamma_N\}-\#\tria.
$$
From $\dim \CR_{\Gamma_D}(\tria^\delta)=\# \cF(\tria^\delta)-\#\{e \in \cF(\tria^\delta)\colon e \subset \Gamma_D\}$ and $\nabla_{\tria^\delta}$ being injective on $\CR_{\Gamma_D}(\tria^\delta)$, and
$(d+1) \#\tria^\delta=2 \#\cF(\tria^\delta)-\#\{e \in \cF(\tria^\delta)\colon e \subset \partial\Omega\}$,
we conclude that
$$
\dim \cS_0^{-1}(\tria^\delta)^d \leq \dim \nabla_{\tria^\delta} \CR_{\Gamma_D}(\tria^\delta) +\dim  \RT_{\Gamma_N}(\divv0;\tria^\delta),
$$
which completes the proof.
\end{proof}

\begin{theorem} \label{thm:important} For $G$, $X^\delta$, and $Y^\delta$ from \eqref{eq:a}-\eqref{eq:c}, it holds that
$$
\inf_{0 \neq (\vec{q},w) \in X^\delta}\frac{\sup_{0 \neq (\vec{z},v) \in Y^\delta} \frac{|(G(\vec{q},w))(\vec{z},v)|}{\|(\vec{z},v)\|_Y}}{\|G(\vec{q},w)\|_{Y'}} \gtrsim 1.\footnotemark[\value{footnoteValueSaver}]
$$
\end{theorem}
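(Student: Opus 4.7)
My plan is to invoke Theorem~\ref{thm:Fortin} by exhibiting a uniformly bounded Fortin operator $\Pi^\delta=(\Pi_1^\delta,\Pi_2^\delta)\in\cL(Y,Y^\delta)$ satisfying $(G(\vec q,w))((\identity-\Pi^\delta)(\vec z,v))=0$ for all $(\vec q,w)\in X^\delta$ and $(\vec z,v)\in Y$. Since $\vec q \in \cS_0^{-1}(\tria^\delta)^d$ and $w \in \cS_0^{-1}(\tria^\delta)$ vary independently, the explicit form \eqref{eq:a} of $G$ splits this Fortin condition into (i) $\divv((\identity-\Pi_1^\delta)\vec z)\perp_{L_2(\Omega)}\cS_0^{-1}(\tria^\delta)$ and (ii), after an elementwise integration by parts of $\int_K\vec q\cdot\nabla(v-\Pi_2^\delta(\vec z,v))\,dx$ using that $\vec q|_K$ is constant, the per-element moment balance
$$\int_K(\vec z-\Pi_1^\delta\vec z)\,dx+\int_{\partial K}(v-\Pi_2^\delta(\vec z,v))\,\vec n_K\,ds=0 \qquad (K\in\tria^\delta).$$

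I would define $\Pi_1^\delta\vec z\in\RT_0(\tria^\delta)\cap H_{0,\Gamma_N}(\divv;\Omega)$ by two simultaneous conditions: $\divv\Pi_1^\delta\vec z=Q_0^\delta\divv\vec z$ (where $Q_0^\delta$ is the $L_2$-projection onto $\cS_0^{-1}(\tria^\delta)$), and the $L_2$-projection of $\Pi_1^\delta\vec z$ onto $\RT_{\Gamma_N}(\divv0;\tria^\delta)$ equals that of $\vec z$. Uniqueness and uniform boundedness follow from the $L_2$-orthogonal splitting $\RT_0(\tria^\delta)\cap H_{0,\Gamma_N}(\divv;\Omega)=\RT_{\Gamma_N}(\divv0;\tria^\delta)\oplus^{\perp_{L_2(\Omega)^d}} V^\delta$ combined with the discrete Poincar\'e inequality $\|\vec p\|_{L_2}\lesssim\|\divv\vec p\|_{L_2}$ for $\vec p\in V^\delta$, which holds uniformly in $\tria^\delta$ under the assumption $|\Gamma_D|>0$. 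For $\Pi_2^\delta$ I would combine a stable Scott--Zhang-type projector $\hat\Pi^\delta$ onto $\cS_1^0(\tria^\delta)\cap H^1_{0,\Gamma_D}(\Omega)$ with facet-bubble corrections:
$$\Pi_2^\delta(\vec z,v):=\hat\Pi^\delta v+\sum_{e\in\cF(\tria^\delta)\setminus\cF^\delta_{\Gamma_D}}\alpha_e(\vec z,v)\,b_e,$$
where $b_e\in\cS_d^0(\tria^\delta)\cap H^1_{0,\Gamma_D}(\Omega)$ is the standard facet-bubble obtained as the $d$-fold product of the barycentric coordinates of $e$, supported on the simplices adjacent to $e$; the enrichment to $\cS_d^0$ is precisely what accommodates $b_e$, since it has total polynomial degree $d$ on each adjacent simplex.

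The main obstacle is proving unique solvability and uniform boundedness of the linear system determining the scalars $(\alpha_e(\vec z,v))_e$. Substituting the ansatz into (ii) yields, for every $K\in\tria^\delta$,
$$\sum_{e\subset\partial K}\alpha_e\,\beta_e\,\vec n_{K,e}=\int_K(\vec z-\Pi_1^\delta\vec z)\,dx+\int_{\partial K}(v-\hat\Pi^\delta v)\,\vec n_K\,ds,$$
with $\beta_e:=\int_e b_e\,ds>0$. Viewed as a linear map $F\colon(\alpha_e)_e\mapsto(F(\alpha)|_K)_K\in\cS_0^{-1}(\tria^\delta)^d$, the image of the left-hand side is exactly $\nabla_{\tria^\delta}\CR_{\Gamma_D}(\tria^\delta)$, since the coupling of $\alpha_e$ across the facet $e$ reproduces the normal-jump pattern of a Crouzeix--Raviart gradient. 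Compatibility of the right-hand side with this subspace is ensured by Lemma~\ref{lem:helm}: testing against any $\vec q_1\in\RT_{\Gamma_N}(\divv0;\tria^\delta)$ annihilates the $\vec z$-defect by the second defining property of $\Pi_1^\delta$, while the sum $\sum_K\int_{\partial K}\vec q_1|_K\cdot\vec n_K\,(v-\hat\Pi^\delta v)\,ds$ telescopes to zero because $\vec q_1$ has continuous normals across interior facets, $\vec q_1\cdot\vec n=0$ on $\Gamma_N$, and $v-\hat\Pi^\delta v=0$ on $\Gamma_D$. The hypothesis $|\Gamma_D|>0$ makes $\nabla_{\tria^\delta}$ injective on $\CR_{\Gamma_D}(\tria^\delta)$, so $F$ restricts to an isomorphism onto $\nabla_{\tria^\delta}\CR_{\Gamma_D}(\tria^\delta)$, and standard scaling on a reference simplex together with locality of the bubbles yields the uniform bound $\|\Pi^\delta\|_{\cL(Y,Y^\delta)}\lesssim 1$.
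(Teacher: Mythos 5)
Your overall strategy (a Fortin operator via Theorem~\ref{thm:Fortin}, splitting the Fortin condition over the independent piecewise constant components $w$ and $\vec{q}$, a $\Pi_1^\delta$ characterised by a commuting divergence condition plus agreement of the $L_2$-projections onto $\RT_{\Gamma_N}(\divv0;\tria^\delta)$, and compatibility of the facet-correction system via Lemma~\ref{lem:helm}) is structurally the same as the paper's, and up to the construction of $\Pi_2^\delta$ it is essentially sound: your two conditions characterise exactly the paper's operator $P_0^\delta+Q^\delta(\identity-P_0^\delta)$, and your telescoping argument correctly shows that the element-averaged defect field (after dividing the $K$-th moment equation by $|K|$, which is also needed to identify the range of your map $F$ with $\nabla_{\tria^\delta}\CR_{\Gamma_D}(\tria^\delta)$) is $L_2$-orthogonal to $\RT_{\Gamma_N}(\divv0;\tria^\delta)$, hence of the form $\nabla_{\tria^\delta} t$ with $t\in\CR_{\Gamma_D}(\tria^\delta)$.

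The genuine gap is the last sentence: ``standard scaling on a reference simplex together with locality of the bubbles'' does not give $\|\Pi^\delta\|_{\cL(Y,Y^\delta)}\lesssim 1$. The coefficients $(\alpha_e)_e$ are \emph{not} determined locally: your system is equivalent to solving $\nabla_{\tria^\delta}t=$ (normalised defect field) for $t\in\CR_{\Gamma_D}(\tria^\delta)$, and then $\alpha_e\eqsim t(m_e)$, the value of $t$ at the barycenter of $e$ (indeed $\beta_e\eqsim |e|$ and $\nabla_{\tria^\delta}$ of the Crouzeix--Raviart basis function of $e$ equals $|e|\,\vec{n}_{K_\pm,e}/|K_\pm|$ on the adjacent simplices). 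The only control you have is $\|\nabla_{\tria^\delta}t\|_{L_2(\Omega)^d}\lesssim\|(\vec{z},v)\|_Y$, but the bubble lifting $t\mapsto\sum_e t(m_e)\,b_e$ is not $H^1$-stable with respect to this quantity: since $|b_e|_{H^1(K)}\eqsim h_K^{d/2-1}$, its $H^1$-seminorm behaves like $\|h_\delta^{-1}t\|_{L_2(\Omega)}$, which for a slowly varying $t$ of unit size away from $\Gamma_D$ exceeds $\|\nabla_{\tria^\delta}t\|_{L_2(\Omega)^d}$ by a factor $\eqsim \max_K h_K^{-1}$; nothing in your argument excludes such $t$. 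Supplying this missing stability is precisely the point where the paper's proof differs: it defines $\tilde v\in\CR_{\Gamma_D}(\tria^\delta)$ by an auxiliary discrete problem and then applies a conforming companion operator $E_{\tria^\delta}\colon \CR_{\Gamma_D}(\tria^\delta)\to \cS_d^0(\tria^\delta)\cap H^1_{0,\Gamma_D}(\Omega)$ with $\ran(\nabla E_{\tria^\delta}-\nabla_{\tria^\delta})\perp_{L_2(\Omega)^d}\cS_0^{-1}(\tria^\delta)^d$ and $\|\nabla E_{\tria^\delta}\cdot\|_{L_2(\Omega)^d}\lesssim\|\nabla_{\tria^\delta}\cdot\|_{L_2(\Omega)^d}$; this operator provides exactly the elementwise gradient-moment preservation you need together with the $H^1$-stability that naive facet bubbles lack. (A smaller omission: existence and uniform boundedness of your $\Pi_1^\delta$ also require surjectivity of $\divv$ from $\RT_0(\tria^\delta)\cap H_{0,\Gamma_N}(\divv;\Omega)$ onto $\cS_0^{-1}(\tria^\delta)$ and the asserted discrete Poincar\'e inequality; both are standard but themselves rest on a bounded commuting projector of the type the paper quotes.)
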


\begin{proof} We construct a Fortin interpolator $\Pi^\delta\colon Y \rightarrow Y^\delta$ of the form $\Pi^\delta(\vec{z},v)=(\Pi_1^\delta \vec{z},\Pi_2^\delta(\vec{z},v))$.

Let $P_0^\delta$ denote the $H(\divv;\Omega)$-bounded projector $H_{0,\Gamma_N}(\divv;\Omega) \rightarrow \RT_0(\tria^\delta) \cap H_{0,\Gamma_N}(\divv;\Omega)$ from \cite{70.991}, which has the commuting diagram property 
$$
\ran \divv (\identity-P_0^\delta) \perp_{L_2(\Omega)} \cS^{-1}_0(\tria^\delta).
$$
With $Q^\delta$ being the $L_2(\Omega)^d$-orthogonal projector onto $\RT_{\Gamma_N}(\divv0;\tria^\delta)$, we set $\Pi_1^\delta=P_0^\delta+Q^\delta(\identity-P_0^\delta) \in \cL\big(H_{0,\Gamma_N}(\divv;\Omega),\RT_0(\tria^\delta) \cap H_{0,\Gamma_N}(\divv;\Omega)\big)$.

Writing, for $(\vec{q},w) \in X^\delta$,  $\vec{q}=\vec{r}+\nabla_{\tria^\delta} t$, where $(\vec{r},t) \in \RT_{\Gamma_N}(\divv0;\tria^\delta) \times \CR_{\Gamma_D}(\tria^\delta)$, 
the definition of $\Pi_1^\delta$, Lemma~\ref{lem:helm}, and the fact that $H_{0,\Gamma_N}(\divv0;\Omega) \perp_{L_2(\Omega)^d} \nabla H_{0,\Gamma_D}^1(\Omega)$ show that
for $(\vec{z},v) \in Y$ it holds that
\begin{align} \nonumber
&(G(\vec{q},w))((\identity-\Pi)(\vec{z},v))\\  \nonumber
&=\int_\Omega (\vec{r}+\nabla_{\tria^\delta} t)\cdot\big((\identity -\Pi_1^\delta)\vec{z}+\nabla(v-\Pi_2^\delta(\vec{z},v)\big)+w\divv(\identity-\Pi_1^\delta)\vec{z}\,dx\\ \label{eq:rhs}
&=\int_\Omega \nabla_{\tria^\delta} t \cdot \big((\identity-P_0^\delta)\vec{z}+\nabla(v-\Pi_2^\delta(\vec{z},v))\big)\,dx.
\end{align}

It remains to define $\Pi_2^\delta(\vec{z},v) \in \cS_d^0(\tria^\delta) \cap H^1_{0,\Gamma_D}(\Omega)$ such that the last expression vanishes for all $t \in\CR_{\Gamma_D}(\tria^\delta)$ and $(\vec{z},v) \in Y$.
Let $\tilde{v} \in \CR_{\Gamma_D}(\tria^\delta)$ solve
$$
\int_\Omega \nabla_{\tria^\delta} t \cdot \nabla_{\tria^\delta} \tilde{v}\,dx=\int_\Omega \nabla_{\tria^\delta} t \cdot \big((\identity-P_0^\delta)\vec{z}+\nabla v)\big)\,dx \quad(t \in\CR_{\Gamma_D}(\tria^\delta)).
$$
It satisfies
$$
\|\nabla_{\tria^\delta} \tilde{v}\|_{L_2(\Omega)^d} \leq \|(\identity -P_0^\delta)\vec{z}\|_{L_2(\Omega)}+|v|_{H^1(\Omega)} \lesssim \|\vec{z}\|_{H(\divv;\Omega)}+|v|_{H^1(\Omega)}.
$$
There exists a conforming companion operator $E_{\tria^\delta}\colon \CR_{\Gamma_D}(\tria^\delta) \rightarrow \cS^0_d(\tria^\delta) \cap H^1_{0,\Gamma_D}(\Omega)$ with $\ran (\nabla E_{\tria^\delta}-\nabla_{\tria^\delta})\perp_{L_2(\Omega)^d} \cS^{-1}_0(\tria^\delta)$, and $\|\nabla E_{\tria^\delta} \cdot\|_{L_2(\Omega)^d} \lesssim \|\nabla_{\tria^\delta} \cdot\|_{L_2(\Omega)}$ on $\CR_{\Gamma_D}(\tria^\delta)$
(one can take the operator $J_2$ from \cite[Proof of Prop.~2.3]{37.46}, see \cite{37.475} for a generalisation to $d \geq 2$).
Defining $\Pi_2^\delta(\vec{z},v):=E_{\tria^\delta} \tilde{v}$, we conclude that \eqref{eq:rhs} vanishes for all $t \in\CR_{\Gamma_D}(\tria^\delta)$, and 
that $\|\Pi_2^\delta(\vec{z},v)\|_{H^1(\Omega)} \lesssim \|\vec{z}\|_{H(\divv;\Omega)}+\|v\|_{H^1(\Omega)}$, so that $\Pi^\delta \in \cL(Y,Y^\delta)$ is a valid Fortin interpolator.
\end{proof}

\begin{remark} Although $G=(G_1,G_2) \in \Lis\big(X,H_{0,\Gamma_N}(\divv;\Omega) \times H^1_{0,\Gamma_D}(\Omega)\big)$, in this subsection we did not verify inf-sup stability for $G_1$ and $G_2$ separately to conclude inf-sup stability for $G$ by Lemma~\ref{lem1}.  The reason is that we did not manage to verify inf-sup stability for $G_1(\vec{q},w)(\vec{z})=\int_\Omega \vec{q}\cdot\vec{z} +w \divv \vec{z}\, dx$.
We notice that in the context of a DPG method, in \cite[Sect.~3]{75.61} inf-sup stability has been demonstrated separately for $G_1$ and $G_2$, even for trial spaces $X^\delta$ of general polynomial degree.
\end{remark}

\subsection{Preconditioners}
At several places, it was desirable or, in case of fractional norms,  even essential to have an efficiently evaluable (uniform) preconditioner $K^\delta={K^\delta}' \in \Lis({Z^\delta}',Z^\delta)$ available, where $Z^\delta$ was of one of the following types:
{\renewcommand{\labelenumi}{(\roman{enumi})}%
\begin{enumerate}
\item \label{case1} $\cS^0_p(\tria^\delta)$ or $\cS^0_p(\tria^\delta) \cap H^1_{0,\Gamma_D}(\Omega)$ equipped with $\|\cdot\|_{H^1(\Omega)}$,
\item \label{case2} $\cS^{-1}_p(\{e \in \cF(\tria^\delta)\colon e \subset \Gamma_D\})$ equipped with $\|\cdot\|_{\widetilde{H}^{-\frac12}(\Gamma_D)}$,
\item \label{case3} $\cS^{0}_p(\{e \in \cF(\tria^\delta)\colon e \subset \Gamma_N\})$ equipped with $\|\cdot\|_{H_{00}^{\frac12}(\Gamma_N)}$,
\item \label{case4} $\RT_0(\tria^\delta) \cap H_{0,\Gamma_N}(\divv;\Omega)$ equipped with $\|\cdot\|_{H(\divv;\Omega)}$.
\end{enumerate}%
}

When $\tria^\delta$ is constructed from recurrent refinements by a fixed refinement rule starting from a fixed coarse partition, multi-level preconditioners of linear computational complexity are available for all four cases (see \cite{75.258} for 
Case~(ii), 
and \cite{14.24,14.26} or \cite{138.295} for
Case~(iv)). 
Alternatives for the fractional Sobolev norms are provided by `operator preconditioners' (see \cite{138.26,249.97,249.975}).

\section{Numerical experiments} \label{sec:numerics}
On a square domain $\Omega = (0,1)^2$ with  Neumann and Dirichlet boundaries $\Gamma_N=\{0\}\times [0,1]$ and $\Gamma_D=\overline{\partial \Omega \setminus \Gamma_N}$, for $g \in H^1_{0,\Gamma_D}(\Omega)'$, $h_D \in H^{\frac12}(\Gamma_D)$, and $h_N \in H^{-\frac12}(\Gamma_N)$
we consider the Poisson problem of finding $u \in H^1(\Omega)$ that satisfies
$$
 \left\{
\begin{array}{r@{}c@{}ll}
-\Delta u&\,\,=\,\,& g &\text{ on } \Omega,\\
u &\,\,=\,\,& h_D &\text{ on } \Gamma_D,\\
\nabla u \cdot \vec{n}&\,\,=\,\,& h_N &\text{ on } \Gamma_N.
\end{array}
\right.
$$
In particular, we take $g=0$, $h_D(x,y) = \cos \tfrac{\pi x}{2}$, and $h_N=1$. Hence because of the incompatibility of the Dirichlet and Neumann data at $\Gamma_D \cap \Gamma_N$, the pair of the gradient of the solution and the solution
 $(\vec{p},u):=(\nabla u,u)$
has (mild) singularities at the points $(0,0)$ and $(0,1)$, see Figure~\ref{fig:solutions}.
\begin{figure}[h!]
\hspace*{0cm}
\begin{subfigure}{0.5\textwidth}
\centering
\includegraphics[width=\linewidth]{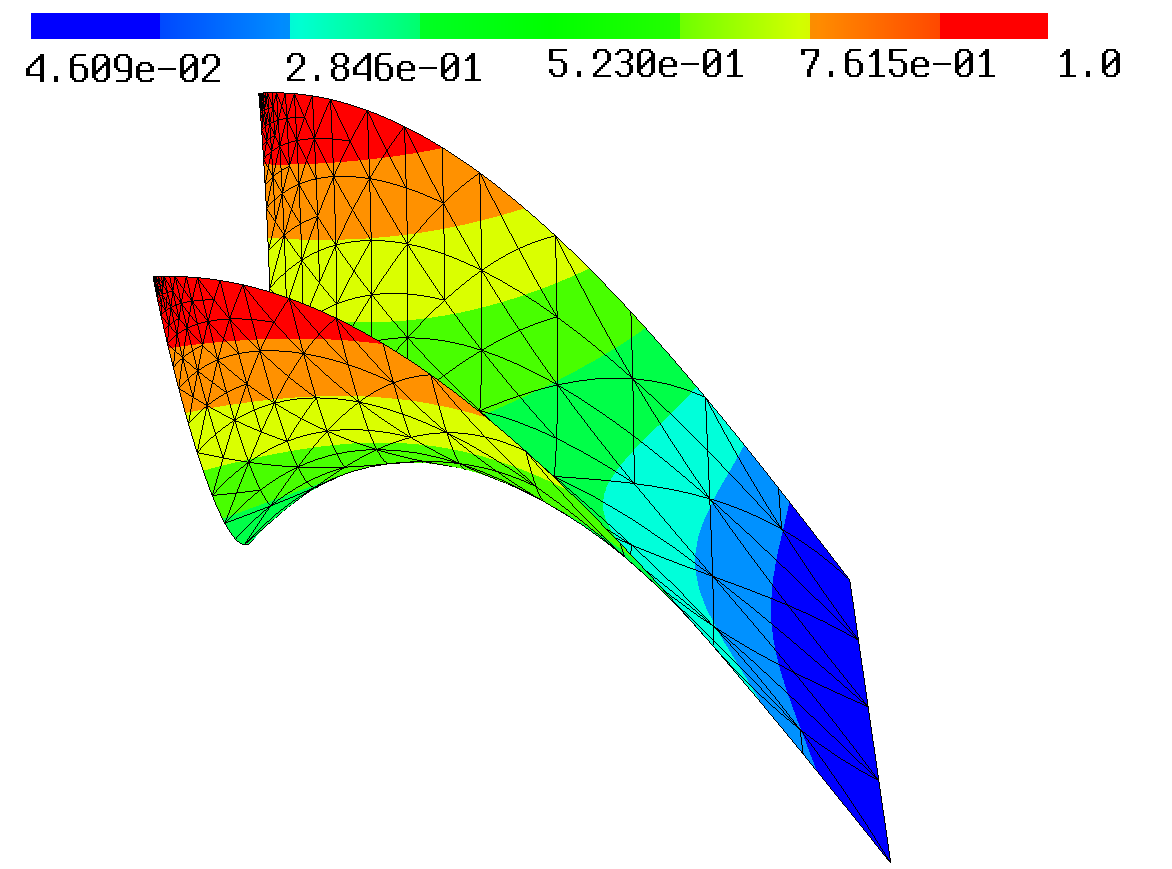}

\end{subfigure}\hspace*{0cm}%
\begin{subfigure}{0.5\textwidth}
\centering
\includegraphics[width=\linewidth]{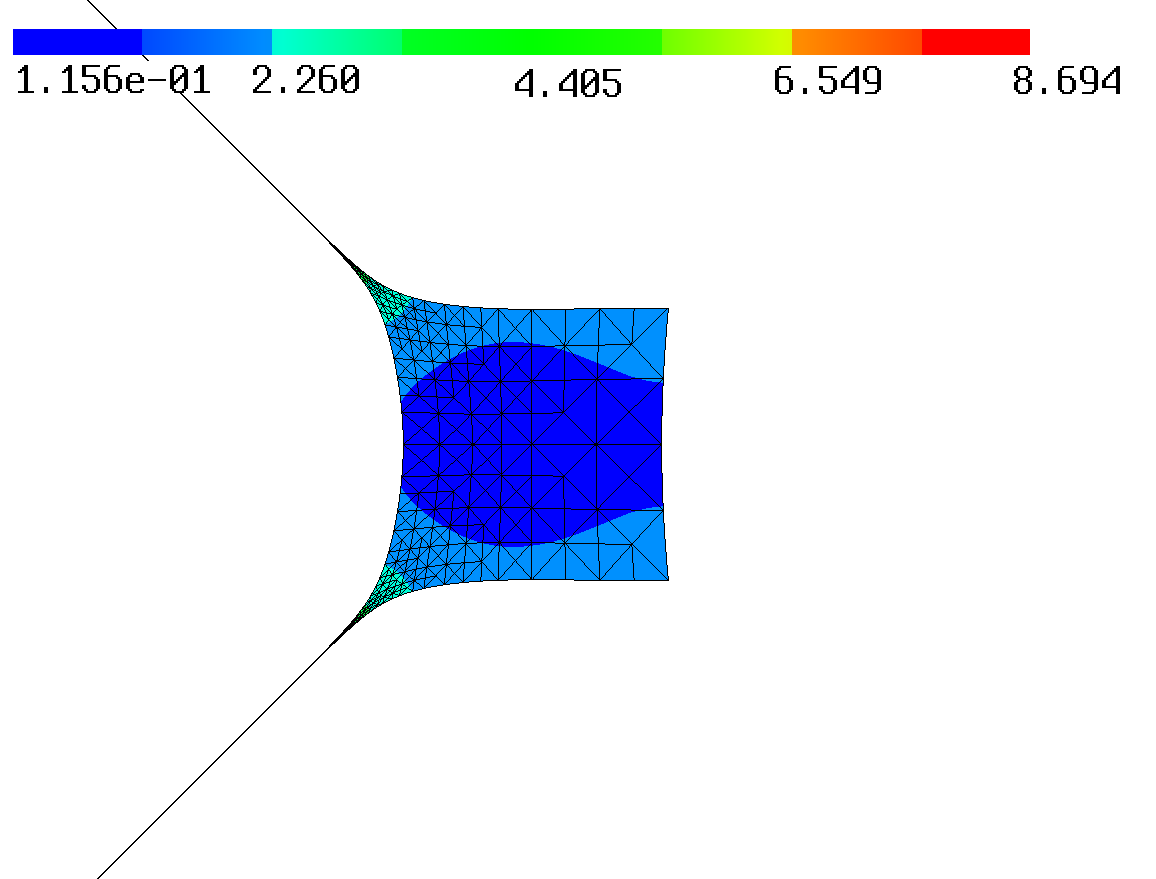}
\end{subfigure}
\caption{Left: a plot of $u^\delta$; right: a plot of $|\vec{p}^\delta|$.}
\label{fig:solutions}
\end{figure}

We consider above problem in the first order ultra-weak formulation from Example \ref{ex:ultra-weak}.
We consider a family of conforming triangulations $\{\mathcal{T}^\delta\}_{\delta}$ of $\Omega$, where each triangulation is created using newest vertex bisections starting from an initial triangulation that consists of 4 triangles created by cutting $\Omega$ along its diagonals. The interior vertex of the initial triangulation is labelled as the `newest vertex' of all four triangles in this initial mesh.
Given some polynomial degree $p \in N_0$, we set
$$
X^\delta:=\cS^{-1}_p(\tria^\delta)^d \times \cS^{-1}_p(\tria^\delta).
$$

With $(G(\undertilde{\vec{p}},\undertilde{u}))(\vec{\mu},\lambda):=\int_\Omega \undertilde{\vec{p}}\cdot \vec{\mu}+\undertilde{u} \divv \vec{\mu}+\undertilde{\vec{p}}\cdot \nabla \lambda\,dx$, for a suitable finite dimensional subspace $Y^\delta=Y^\delta(X^\delta) \subset Y:=H_{0,\Gamma_N}(\divv;\Omega) \times H^1_{0,\Gamma_D}(\Omega)$ the practical MINRES  method computes
$(\vec{p}^\delta,u^\delta,\vec{\mu}^\delta,\lambda^\delta) \in X^\delta \times Y^\delta$ such that 
\begin{align*}
	\langle (\vec{\mu}^\delta, \lambda^\delta),(\undertilde{\vec{\mu}},\undertilde{\lambda})\rangle&_{H(\divv;\Omega)\times H^1(\Omega)}  +  (G(\vec{p}^\delta,u^\delta))(\undertilde{\vec{\mu}},\undertilde{\lambda}) +(G(\undertilde{\vec{p}}, \undertilde{u}))(\vec{\mu}^\delta, \lambda^\delta)  \\&\qquad=\int_{\Gamma_D} h_D \undertilde{\vec{\mu}}\cdot \vec{n}\,ds+ g( \undertilde{\lambda})  + \int_{\Gamma_N} h_N \undertilde{\lambda} \,ds=: f(\undertilde{\vec{\mu}},\undertilde{\lambda})
\end{align*}
for all $(\undertilde{\vec{p}},\undertilde{u},\undertilde{\vec{\mu}},\undertilde{\lambda}) \in X^\delta \times Y^\delta$.

As we have seen, when $Y^\delta$ is selected such that 
$$
\gamma^\delta = \inf_{0 \neq(\undertilde{\vec{p}},\undertilde{u}) \in X^\delta}\frac{\sup_{0 \neq (\vec{\mu},\lambda) \in Y^\delta} \frac{|(G (\undertilde{\vec{p}},\undertilde{u})(\vec{\mu},\lambda)|}{\|(\vec{\mu},\lambda)\|_{Y}}}{\|G (\undertilde{\vec{p}},\undertilde{u})\|_{Y'}}\, \gtrsim 1,
$$
then $(\vec{p}^\delta,u^\delta)$ is a quasi-best approximation from $X^\delta$ to $(\vec{p},u)$ w.r.t.~the norm on $X:=L_2(\Omega)^d \times L_2(\Omega)$.

For $p \in \N_0$, we take
$$
Y^\delta:=\big(\RT_{p}(\tria^\delta) \times \cS^0_{d+p}(\tria^\delta)\big) \cap Y, 
$$
where thus $d=2$.
Theorem~\ref{thm:important} shows that for $p=0$ above uniform inf-sup condition is satisfied.
Using that, thanks to $G \in \Lis(X,Y')$, 
$$
\gamma^\delta \eqsim \tilde{\gamma}^\delta:=\inf_{0 \neq(\undertilde{\vec{p}},\undertilde{u}) \in X^\delta}
\sup_{0 \neq (\vec{\mu},\lambda) \in Y^\delta} 
\frac{|(G (\undertilde{\vec{p}},\undertilde{u})(\vec{\mu},\lambda)|}{\|(\vec{\mu},\lambda)\|_Y \|(\undertilde{\vec{p}},\undertilde{u})\|_X},
$$
for $p \in \{1,2,3,4\}$ we verified numerically whether our choice of $Y^\delta$ gives inf-sup stability. 
The results given in Figure~\ref{fig:ultra-weakinfsup} indicate that this is the case. 
\begin{figure}[h!]
\hspace*{0cm}
\begin{subfigure}{0.5\textwidth}
\centering
\includegraphics[width=\linewidth]{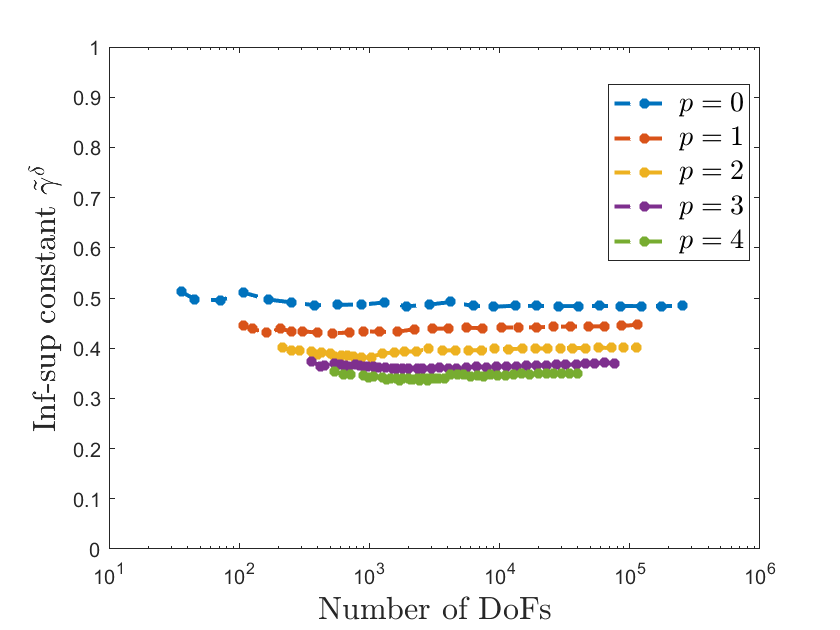}

\end{subfigure}\hspace*{0cm}%
\begin{subfigure}{0.5\textwidth}
\centering
\includegraphics[width=\linewidth]{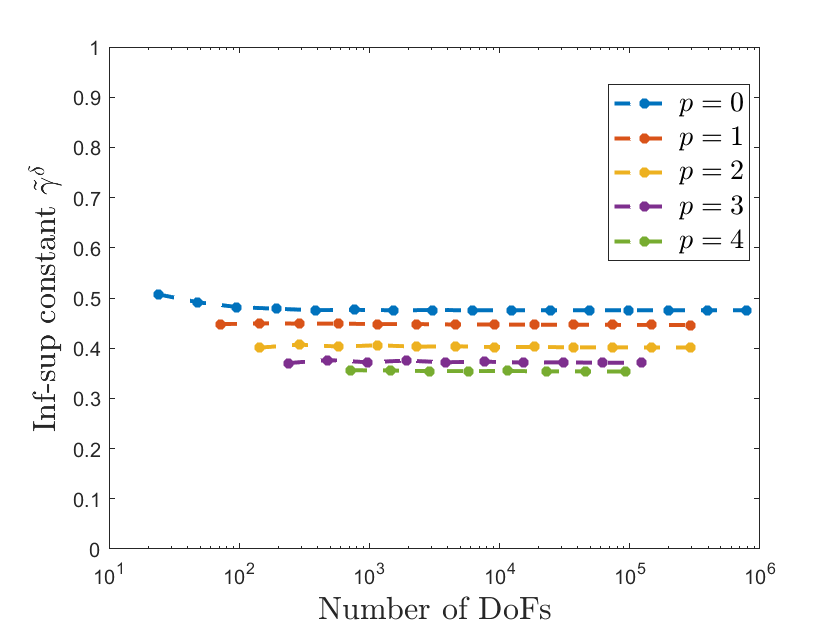}
\end{subfigure}

\caption{Number of DoFs in $X^\delta$ vs.~$\tilde\gamma^\delta$. Left: triangulations that are locally refined towards $(0,0)$ and $(0,1)$; right: uniform triangulations.}
\label{fig:ultra-weakinfsup}
\end{figure}

The practical MINRES  method comes with a built-in a posteriori error estimator given by $\cE(\vec{p}^\delta,u^\delta,f)=\sqrt{\sum_{T \in \tria^\delta} \|\vec{\mu}^\delta\|_{H(\divv;T)}^2+\|\lambda^\delta\|_{H^1(T)}^2}$ (see Remark~\ref{rem:comp}). For $p \in \{0,1,2,3\}$ we performed numerical experiments with uniform and adaptively refined triangulations. Concerning the latter, we have used the element-wise error indicators $\sqrt{\|\vec{\mu}^\delta\|_{H(\divv;T)}^2+\|\lambda^\delta\|_{H^1(T)}^2}$ to drive an AFEM with D\"{o}rfler marking with marking parameter $\theta=0.6$.
We have seen that the estimator $\cE(\vec{p}^\delta,u^\delta,f)$ is efficient, but because the data-oscillation term can be of the order of the best approximation error, it is not necessarily reliable.
Therefore instead of using the a posteriori error estimator to assess the quality of our MINRES method, as a measure for the error 
we computed the $X$-norm of the difference with the MINRES solution for $p=4$ on the same triangulation, denoted as $(\vec{p}_4^\delta,u_4^\delta)$. The results given in Figure~\ref{fig2} show that for uniform refinements increasing $p$ does not improve the order of convergence, due to the limited regularity of
 the solution in the Hilbertian Sobolev scale. The results indicate that the solution is just in $H^2(\Omega)$.
\begin{figure}[h!]
\hspace*{0cm}
\begin{subfigure}{0.5\textwidth}
\centering
\includegraphics[width=\linewidth]{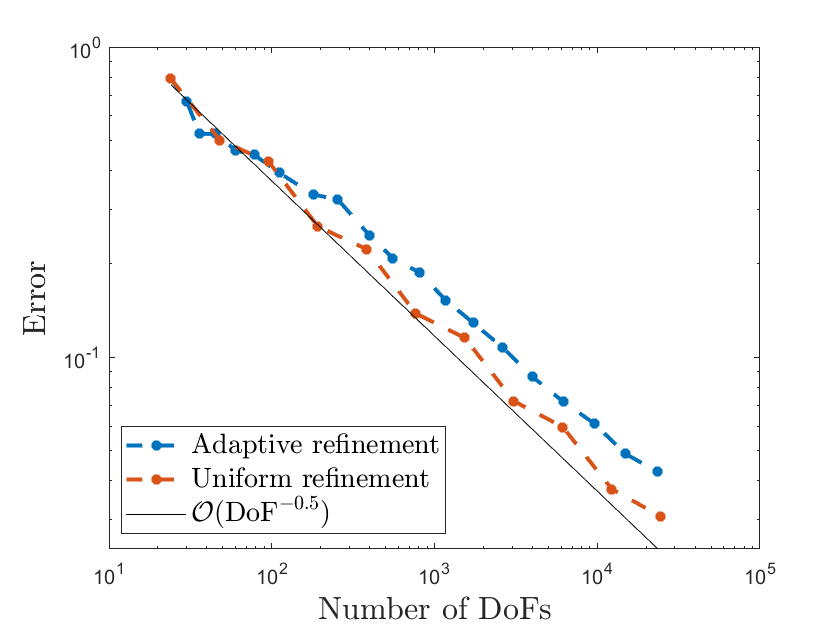}

\end{subfigure}\hspace*{0cm}%
\begin{subfigure}{0.5\textwidth}
\centering
\includegraphics[width=\linewidth]{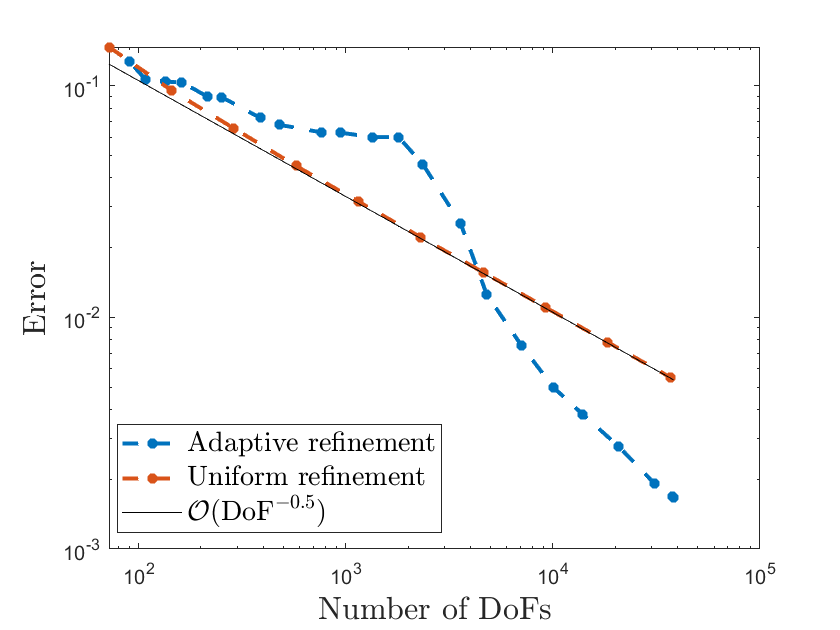}
\end{subfigure}
\begin{subfigure}{0.5\textwidth}
\centering
\includegraphics[width=\linewidth]{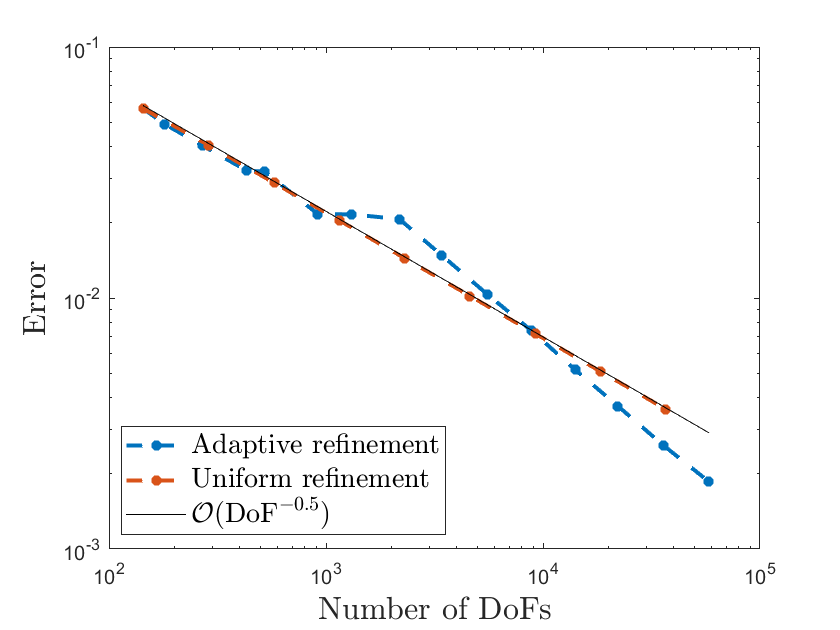}

\end{subfigure}\hspace*{0cm}%
\begin{subfigure}{0.5\textwidth}
\centering
\includegraphics[width=\linewidth]{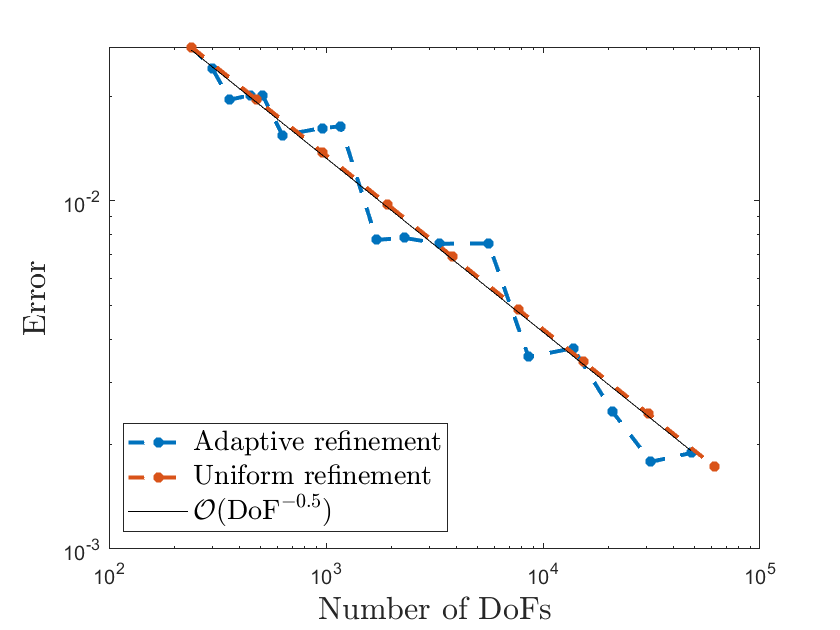}
\end{subfigure}
\caption{Number of DoFs in $X^\delta$ vs.~$\|(\vec{p}_4^\delta,u_4^\delta)-(\vec{p}^\delta,u^\delta)\|_X$. Left-upper: $p=0$, right-upper: $p=1$, left-bottom: $p=2$, right-bottom: $p=3$.}
\label{fig2}
\end{figure}
Furthermore we see that adaptivity does not yield improved convergence rates. We expect that the reason  for the latter is that, with our current choice of $Y^\delta$, the data oscillation term dominates our error estimator, so that the local error indicators do not provide the correct information where to refine.

For this reason, we repeat the experiment from Figure~\ref{fig2} using the higher order test space
$$
Y^\delta:=\big(\RT_{p+1}(\tria^\delta) \times \cS^0_{d+p+1}(\tria^\delta)\big) \cap Y.
$$
Now we observe that the a posteriori error estimator is proportional (and actually quite close) to the error notion $\|(\vec{p}_4^\delta,u_4^\delta)-(\vec{p}^\delta,u^\delta)\|_X$,
and so we expect it indeed to be also reliable. In Figure~\ref{fig3} we give the number of DoFs vs.~$\cE(\vec{p}^\delta,u^\delta,f)$. 
\begin{figure}[h!]
\hspace*{0cm}
\begin{subfigure}{0.5\textwidth}
\centering
\includegraphics[width=\linewidth]{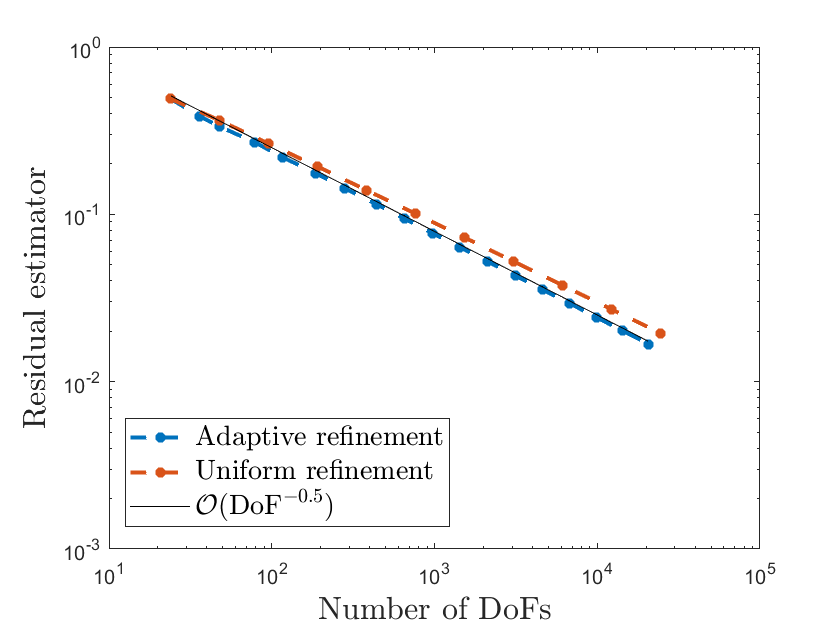}

\end{subfigure}\hspace*{0cm}%
\begin{subfigure}{0.5\textwidth}
\centering
\includegraphics[width=\linewidth]{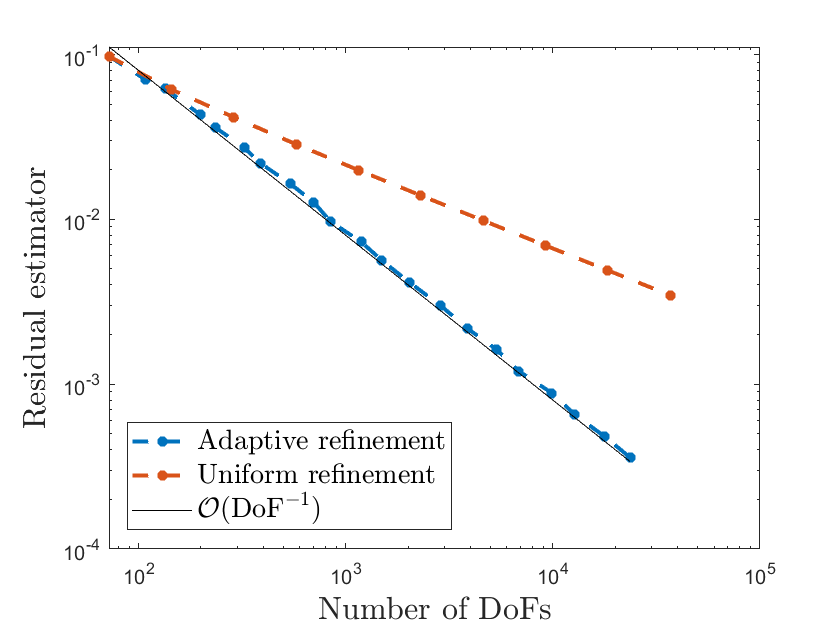}
\end{subfigure}

\begin{subfigure}{0.5\textwidth}
\centering
\includegraphics[width=\linewidth]{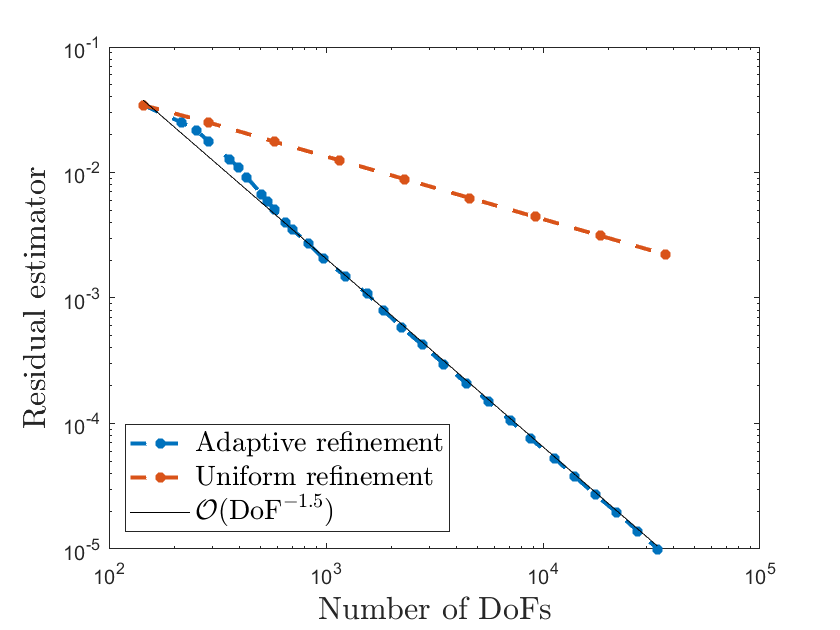}

\end{subfigure}\hspace*{0cm}%
\begin{subfigure}{0.5\textwidth}
\centering
\includegraphics[width=\linewidth]{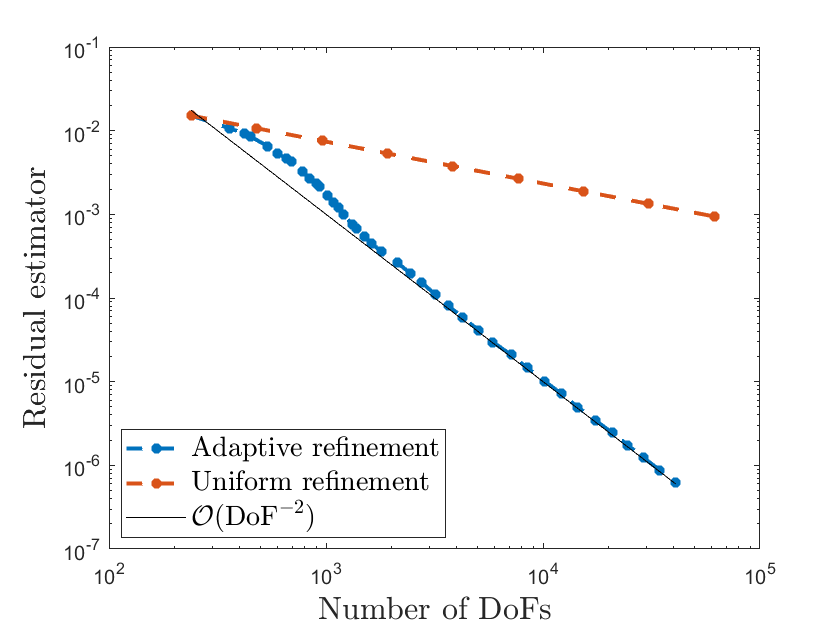}
\end{subfigure}
\caption{Number of DoFs in $X^\delta$ vs.~$\cE(\vec{p}^\delta,u^\delta,f)$. 
 Left-upper: $p=0$, right-upper: $p=1$, left-bottom: $p=2$, right-bottom: $p=3$.}
\label{fig3}
\end{figure}
As expected, the rates for uniform refinements are as before, but now we observe for the adaptive routine the generally best possible rates allowed by the order of approximation of $X^\delta$.

\section{Conclusion} \label{sec:conclusion}
In MINRES discretisations of PDEs often parts of the residual are measured in fractional or negative Sobolev norms.
In this paper a general approach has been presented to turn such an `impractical' MINRES method into a practical one, without compromizing quasi-optimality of the obtained numerical approximation, assuming that the test space that is employed is chosen such that a (uniform) inf-sup condition is valid.
The resulting linear system is of a symmetric saddle-point form, but can be replaced by a symmetric positive definite system by the application of a (uniform) preconditioner at the test space, while still preserving quasi-optimality.
For four different formulations of scalar second order elliptic PDEs, the aforementioned uniform inf-sup condition has been verified for pairs of finite element trial and test spaces.
Numerical results have been presented for an ultra-weak first order system formulation of Poisson's problem that allows for a very convenient treatment of inhomogeneous mixed Dirichlet and Neumann boundary conditions.


\begin{thebibliography}{DGMZ12}

\bibitem[AF89]{14.23}
D.N. Arnold and R.~S. Falk.
\newblock A uniformly accurate finite element method for the
  {R}eissner-{M}indlin plate.
\newblock {\em SIAM J. Numer. Anal.}, 26(6):1276--1290, 1989.

\bibitem[AFW97]{14.24}
D.N. Arnold, R.S. Falk, and R.~Winther.
\newblock Preconditioning in {$H({\rm div})$} and applications.
\newblock {\em Math. Comp.}, 66(219):957--984, 1997.

\bibitem[AFW00]{14.26}
D.N. Arnold, R.S. Falk, and R.~Winther.
\newblock Multigrid in {$H({\rm div})$} and {$H({\rm curl})$}.
\newblock {\em Numer. Math.}, 85(2):197--217, 2000.

\bibitem[BG09]{23.5}
P.~B. Bochev and M.~D. Gunzburger.
\newblock {\em Least-squares finite element methods}, volume 166 of {\em
  Applied Mathematical Sciences}.
\newblock Springer, New York, 2009.

\bibitem[BLP97]{35.83}
J.~H. Bramble, R.~D. Lazarov, and J.~E. Pasciak.
\newblock A least-squares approach based on a discrete minus one inner product
  for first order systems.
\newblock {\em Math. Comp.}, 66(219):935--955, 1997.

\bibitem[BLP98]{35.835}
J.H. Bramble, R.D. Lazarov, and J.E. Pasciak.
\newblock Least-squares for second-order elliptic problems.
\newblock {\em Comput. Methods Appl. Mech. Engrg.}, 152(1-2):195--210, 1998.
\newblock Symposium on Advances in Computational Mechanics, Vol. 5 (Austin, TX,
  1997).

\bibitem[BS14]{35.8565}
D.~Broersen and R.P. Stevenson.
\newblock A robust {P}etrov-{G}alerkin discretisation of convection-diffusion
  equations.
\newblock {\em Comput. Math. Appl.}, 68(11):1605--1618, 2014.

\bibitem[CDG14]{35.93556}
C.~Carstensen, L.~Demkowicz, and J.~Gopalakrishnan.
\newblock A posteriori error control for {DPG} methods.
\newblock {\em SIAM J. Numer. Anal.}, 52(3):1335--1353, 2014.

\bibitem[CGS13]{37.46}
C.~Carstensen, D.~Gallistl, and M.~Schedensack.
\newblock Quasi-optimal adaptive pseudostress approximation of the {S}tokes
  equations.
\newblock {\em SIAM J. Numer. Anal.}, 51(3):1715--1734, 2013.

\bibitem[CP20]{37.475}
C.~Carstensen and S.~Puttkammer.
\newblock How to prove the discrete reliability for nonconforming finite
  element methods.
\newblock {\em J. Comput. Math.}, 38(1):142--175, 2020.

\bibitem[DG11]{64.14}
L.~Demkowicz and J.~Gopalakrishnan.
\newblock A class of discontinuous {P}etrov-{G}alerkin methods. {II}. {O}ptimal
  test functions.
\newblock {\em Numer. Methods Partial Differential Equations}, 27(1):70--105,
  2011.

\bibitem[DGMZ12]{64.155}
L.~Demkowicz, J.~Gopalakrishnan, I.~Muga, and J.~Zitelli.
\newblock Wavenumber explicit analysis of a {DPG} method for the
  multidimensional {H}elmholtz equation.
\newblock {\em Comput. Methods Appl. Mech. Engrg.}, 213/216:126--138, 2012.

\bibitem[DST21]{64.586}
L.~Diening, J.~Storn, and T.~Tscherpel.
\newblock Interpolation {O}perator on negative {S}obolev {S}paces, 2021.

\bibitem[EGSV22]{70.991}
A.~Ern, Th. Gudi, I.~Smears, and M.~Vohral\'{\i}k.
\newblock Equivalence of local- and global-best approximations, a simple stable
  local commuting projector, and optimal {$hp$} approximation estimates in
  {$\bold H({\rm div})$}.
\newblock {\em IMA J. Numer. Anal.}, 42(2):1023--1049, 2022.

\bibitem[F{\uumlaut}h21]{75.258}
Th. F{\uumlaut}hrer.
\newblock Multilevel decompositions and norms for negative order {S}obolev
  spaces.
\newblock {\em Math. Comp.}, 91(333):183--218, 2021.

\bibitem[F{\uumlaut}h22]{75.068}
Th. F{\uumlaut}hrer.
\newblock On a mixed {FEM} and a {FOSLS} with {$H^{-1}$} loads, 2022,
  2210.14063.

\bibitem[FHK22]{75.259}
Th. F{\uumlaut}hrer, N.~Heuer, and M.~Karkulik.
\newblock M{INRES} for second-order {PDE}s with singular data.
\newblock {\em SIAM J. Numer. Anal.}, 60(3):1111--1135, 2022.

\bibitem[GQ14]{75.61}
J.~Gopalakrishnan and W.~Qiu.
\newblock An analysis of the practical {DPG} method.
\newblock {\em Math. Comp.}, 83(286):537--552, 2014.

\bibitem[GS21]{75.28}
G.~Gantner and R.P. Stevenson.
\newblock Further results on a space-time {FOSLS} formulation of parabolic
  {PDE}s.
\newblock {\em ESAIM Math. Model. Numer. Anal.}, 55(1):283--299, 2021.

\bibitem[Hip06]{138.26}
R.~Hiptmair.
\newblock Operator preconditioning.
\newblock {\em Comput. Math. Appl.}, 52(5):699--706, 2006.

\bibitem[HX07]{138.295}
R.~Hiptmair and J.~Xu.
\newblock Nodal auxiliary space preconditioning in {${\bf H}({\bf curl})$} and
  {${\bf H}({\rm div})$} spaces.
\newblock {\em SIAM J. Numer. Anal.}, 45(6):2483--2509, 2007.

\bibitem[MS23]{204.18}
H.~Monsuur and R.P. Stevenson.
\newblock A pollution-free ultra-weak {FOSLS} discretization of the {H}elmholtz
  equation, 2023, 2303.16508.

\bibitem[Sta99]{249.06}
G.~Starke.
\newblock Multilevel boundary functionals for least-squares mixed finite
  element methods.
\newblock {\em SIAM J. Numer. Anal.}, 36(4):1065--1077 (electronic), 1999.

\bibitem[Ste14]{249.96}
R.P. Stevenson.
\newblock First-order system least squares with inhomogeneous boundary
  conditions.
\newblock {\em IMA J. Numer. Anal.}, 34(3):863--878, 2014.

\bibitem[SvV20a]{249.97}
R.P. Stevenson and R.~van Veneti\"{e}.
\newblock Uniform preconditioners for problems of negative order.
\newblock {\em Math. Comp.}, 89(322):645--674, 2020.

\bibitem[SvV20b]{249.975}
R.P. Stevenson and R.~van Veneti\"{e}.
\newblock Uniform preconditioners for problems of positive order.
\newblock {\em Comput. Math. Appl.}, 79(12):3516--3530, 2020.

\bibitem[SW21a]{249.992}
R.P. Stevenson and J.~Westerdiep.
\newblock Minimal residual space-time discretizations of parabolic equations:
  asymmetric spatial operators.
\newblock {\em Comput. Math. Appl.}, 101:107--118, 2021.

\bibitem[SW21b]{249.99}
R.P. Stevenson and J.~Westerdiep.
\newblock Stability of {G}alerkin discretizations of a mixed space-time
  variational formulation of parabolic evolution equations.
\newblock {\em IMA J. Numer. Anal.}, 41(1):28--47, 2021.

\bibitem[SZ90]{247.2}
L.~R. Scott and S.~Zhang.
\newblock Finite element interpolation of nonsmooth functions satisfying
  boundary conditions.
\newblock {\em Math. Comp.}, 54(190):483--493, 1990.

\bibitem[Tan13]{250.1}
F.~Tantardini.
\newblock {\em Quasi-optimality in the backward Euler-Galerkin method for
  linear parabolic problems}.
\newblock PhD thesis, Universita degli Studi di Milano, 2013.

\bibitem[TV16]{258.4}
F.~Tantardini and A.~Veeser.
\newblock The {$L^2$}-projection and quasi-optimality of {G}alerkin methods for
  parabolic equations.
\newblock {\em SIAM J. Numer. Anal.}, 54(1):317--340, 2016.

\end{thebibliography}
\end{document}